\numberwithin{equation}{section}
\begin{document}

\title[Cyclotomic expansions for the colored HOMFLY-PT invariants ]
{Cyclotomic expansion for the colored HOMFLY-PT invariants of double
twist knots}
\author[Qingtao Chen, Kefeng Liu and Shengmao Zhu]{Qingtao Chen, Kefeng
Liu and Shengmao Zhu}
\address{Division of Science \\
New York University Abu Dhabi \\
Abu Dhabi \\
United Arab Emirates} \email{chenqtao@nyu.edu}
\address{Mathematical Science Research Center \\
Chongqing University of Technology  \\
Chongqing, P. R. China. }
\address{Department of mathematics \\
University of California at Los Angeles, Box 951555\\
Los Angeles, CA, 90095-1555.} \email{liu@math.ucla.edu}

\address{Department of Mathematics \\ Zhejiang International Studies
University}
\address{Center of Mathematical Sciences \\
Zhejiang University, Box 310027 \\
Hangzhou, P. R. China. } \email{szhu@zju.edu.cn}

\begin{abstract}
In this short note, we prove the cyclotomic expansion formula for
the colored HOMFLY-PT invariant of double twist knots, it confirms
the cyclotomic expansion conjecture for $SU(N)$-invariants proposed
in \cite{CLZ}.
\end{abstract}

\maketitle

\theoremstyle{plain} \newtheorem{thm}{Theorem}[section] \newtheorem{theorem}[%
thm]{Theorem} \newtheorem{lemma}[thm]{Lemma} \newtheorem{corollary}[thm]{%
Corollary} \newtheorem{proposition}[thm]{Proposition} \newtheorem{conjecture}%
[thm]{Conjecture} \theoremstyle{definition}
\newtheorem{remark}[thm]{Remark}
\newtheorem{remarks}[thm]{Remarks} \newtheorem{definition}[thm]{Definition}
\newtheorem{example}[thm]{Example}






\section{Introduction}
Colored HOMFLY-PT invariant of link is an important quantum
invariant in mathematics and physics, we refer to \cite{Zhu} and the
references therein for a recent review of the colored HOMFLY-PT
invariant and related topics. During the past several years, we
spent a lot of efforts to study the general structures for colored
HOMFLY-PT invariants. Motivated by the congruence skein relations
discovered in \cite{CLPZ} and the celebrated cyclotomic expansion
formula for colored Jones polynomial due to Habiro \cite{Hab},  we
proposed the cyclotomic expansion conjecture in \cite{CLZ} (see also
Conjecture 2.4 in version2 of \cite{NO}) for the colored $SU(n)$
invariants $J_{N}^{SU(n)}(\mathcal{K};q)$ of a knot $\mathcal{K}$
which is defined as the $a=q^n$ specialization of the normalized
colored HOMFLY-PT invariant
\begin{align} \label{formula-normalizedhomfly}
\mathscr{H}_{N}(\mathcal{K};q,a):=\frac{W_{(N)}(\mathcal{K};q,a)}{W_{(N)}(U;q,a)},
\end{align}
where $W_{(N)}(\mathcal{K};q,a)$ denotes the colored HOMFLY-PT
invariant of $\mathcal{K}$ with the symmetric representation labeled
by partition $(N)$, and $U$ denotes the unknot.
$\mathscr{H}_N(\mathcal{K};q,a)$ is normalized so that it is to be
$1$ for the unknot $U$.
\begin{conjecture} \label{conjecture-cycl}For any knot
$\mathcal{K}$, there exist Laurent polynomials
$H_k^{(n)}(\mathcal{K})\in \mathbb{Z}[q,q^{-1}]$, independent of $N$
($N\geq 0$). Such that
\begin{align}
J_{N}^{SU(n)}(\mathcal{K};q)=\sum_{k=0}^{N}C_{N+1,k}^{(n)}H_k^{(n)}(\mathcal{K}),
\end{align}
where $C_{N+1,k}^{(n)}=\{N-(k-1)\}\{N-(k-2)\}\cdots
\{N-1\}\{N\}\{N+n\}\{N+n+1\}\cdots \{N+n+(k-1)\}$, for $k=1,..,N$,
and $C_{N+1,0}^{(n)}=1$. In particular,
$J_0^{SU(n)}(\mathcal{K};q)=H_0^{(n)}(\mathcal{K})=1$.
\end{conjecture}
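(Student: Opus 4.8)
\emph{Reduction to an integrality statement.} I would begin by exploiting the triangular shape of the system. Writing the coefficient as $C_{N+1,k}^{(n)}=\prod_{j=0}^{k-1}\{N-j\}\{N+n+j\}$, each factor $\{m\}$ vanishes exactly when $m=0$ and is otherwise a nonzero non-unit of $\mathbb{Z}[q,q^{-1}]$. Hence $C_{N+1,k}^{(n)}=0$ for all $0\le N\le k-1$, whereas the diagonal term
\begin{align*}
C_{N+1,N}^{(n)}=\{1\}\{2\}\cdots\{N\}\cdot\{N+n\}\{N+n+1\}\cdots\{2N+n-1\}
\end{align*}
is nonzero for every $N\ge 1$, and $C_{N+1,0}^{(n)}=1$. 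Since $J_N^{SU(n)}(\mathcal{K};q)$ is a Laurent polynomial (the $a=q^n$ specialization of the colored HOMFLY-PT invariant), the relations of Conjecture~\ref{conjecture-cycl} form a lower-triangular linear system with nonzero diagonal. I would therefore set $H_0^{(n)}:=1$ and define, recursively,
\begin{align*}
H_N^{(n)}:=\frac{1}{C_{N+1,N}^{(n)}}\Bigl(J_N^{SU(n)}(\mathcal{K};q)-\sum_{k=0}^{N-1}C_{N+1,k}^{(n)}H_k^{(n)}\Bigr)\in\mathbb{Q}(q).
\end{align*}
With this definition the expansion holds at every level $N$ by construction, and each $H_k^{(n)}$ is fixed once and for all; in particular the asserted independence of $N$ is automatic. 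Thus the entire content of the conjecture is the \emph{integrality} claim $H_k^{(n)}\in\mathbb{Z}[q,q^{-1}]$, equivalently that the numerator above is divisible by $C_{N+1,N}^{(n)}$ in $\mathbb{Z}[q,q^{-1}]$ for every $N$ --- a family of congruences among the colored invariants of the kind studied in \cite{CLPZ}.

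\emph{A structural route to integrality.} To establish this divisibility for an arbitrary knot I would follow the template of Habiro's proof \cite{Hab} of the case $n=2$. Present $\mathcal{K}$ as the closure of a $(1,1)$-tangle and form the universal $U_q(\mathfrak{sl}_n)$ invariant $\mathbf{J}(\mathcal{K})$; it is central, lying in a completion of $Z(U_q(\mathfrak{sl}_n))$ and, crucially, in the image of the Lusztig integral form $U_q^{\mathbb{Z}}(\mathfrak{sl}_n)$, while its scalar action on the symmetric representation $V_{(N)}$ recovers $J_N^{SU(n)}(\mathcal{K};q)$. The next step is to match the basis: one exhibits central elements $\sigma_k$ whose eigenvalue on $V_{(N)}$ equals $C_{N+1,k}^{(n)}$. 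Because the symmetric weights $N\omega_1$ lie on a ray, the quadratic Casimir already separates the $V_{(N)}$, so a single-variable cyclotomic completion --- exactly as for $\mathfrak{sl}_2$ --- is the natural target, the products $\prod_j\{N-j\}\{N+n+j\}$ being polynomials of degree $k$ in the ribbon/Casimir eigenvalue. Expanding $\mathbf{J}(\mathcal{K})$ in the $\sigma_k$ and reading off the coefficients would identify them with the $H_k^{(n)}$ and deduce their integrality from the integral form.

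\emph{The main obstacle.} The hard part is the final integrality/completion step, and it is the reason the statement remains a conjecture for general $n$. For $\mathfrak{sl}_2$ Habiro proved a sharp theorem: the relevant cyclotomic completion of $\mathbb{Z}[q,q^{-1}]$ admits the products of quantum integers as an integral basis, and the universal $\mathfrak{sl}_2$ invariant lies in its $\mathbb{Z}[q,q^{-1}]$-span. No analogue is known for $\mathfrak{sl}_n$ with $n>2$, where the center is a polynomial algebra in $n-1$ higher Casimirs and controlling the denominators $C_{N+1,N}^{(n)}$ \emph{uniformly in $N$} is precisely what is missing. As a concrete substitute I would, as a first step, bypass the general structural theory and verify the divisibility directly for families whose colored HOMFLY-PT invariant is known in closed form; this is what is carried out below for the double twist knots, and the universal-invariant argument above indicates the route toward the conjecture in full generality.
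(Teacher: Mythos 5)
You were asked to prove a statement that the paper itself leaves open: Conjecture \ref{conjecture-cycl} is proved in the paper only for the double twist knots $\mathcal{K}_{p,s}$ (Theorem \ref{Theorem-main} and its Corollary), and your proposal, as you concede in your final paragraph, does not close the general case either. Your reduction is correct and cleanly stated: since the diagonal coefficient $C_{N+1,N}^{(n)}=\{1\}\cdots\{N\}\,\{N+n\}\cdots\{2N+n-1\}$ is nonzero, the system is lower-triangular over $\mathbb{Q}(q)$, your recursion determines each $H_k^{(n)}$ uniquely, independence of $N$ is automatic, and the entire content of the conjecture is the integrality $H_k^{(n)}\in\mathbb{Z}[q,q^{-1}]$. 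Your Habiro-style program is also plausible in its setup --- note in its favor that $\{N-j\}\{N+n+j\}=q^{2N+n}+q^{-2N-n}-q^{n+2j}-q^{-n-2j}$, so $C_{N+1,k}^{(n)}$ really is a degree-$k$ polynomial in the single variable $q^{2N+n}+q^{-2N-n}$, just as in the $\mathfrak{sl}_2$ case --- but the decisive step, an integral-basis/cyclotomic-completion theorem for $\mathfrak{sl}_n$ with $n>2$, is precisely what is unknown. So the gap you name is genuine, but it is the gap that makes the statement a conjecture; there is no general proof in the paper that you could have missed.

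Where a comparison is possible is the one instance the paper does settle. There the strategy differs from your ``invert the triangular system and check divisibility by $C_{N+1,N}^{(n)}$'' plan: the paper never divides through; it produces the expansion directly, with manifestly integral coefficients, by skein theory in the solid torus following Masbaum and Kawagoe. Concretely, it expands $H_N$ in Kawagoe's elements $R_k$, uses $e_{R_i}(D_{k,k})=0$ for $i>k$ (Lemma \ref{Coro:vanishi}) to kill all cross terms between the clasp and the twist region, identifies the twist coefficient $T_{k,p}$ with $C_{k,k}^{(p)}$ (Lemma \ref{lemma:T=C}), and proves the key divisibility $\{k\}!\mid C_{k,k}^{(p)}$ (Lemma \ref{lemma:2ptwist1}), with the explicit quotient $\tilde{C}_{k,k}^{(p)}\in\mathbb{Z}[q,q^{-1}]$ of formula (\ref{formula-tildec}). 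This yields Theorem \ref{Theorem-main} in the basis $\left[\begin{array}{@{\,}c@{\,}}N \\ k\end{array}\right]\{N+k-1;a\}_k\{k-2;a\}_k$, and at $a=q^n$ one checks
\begin{align*}
\left[\begin{array}{@{\,}c@{\,}}N \\ k\end{array}\right]\{N+k-1;q^n\}_k\{k-2;q^n\}_k
= C_{N+1,k}^{(n)}\left[\begin{array}{@{\,}c@{\,}}n+k-2 \\ k\end{array}\right],
\end{align*}
so the resulting $H_k^{(n)}(\mathcal{K}_{p,s})$ are visibly in $\mathbb{Z}[q,q^{-1}]$. If you pursue your program, this is the instructive lesson: for any family where the conjecture has actually been verified, the route is a closed-form presentation of the invariant in which the cyclotomic basis appears with integral coefficients from the start, rather than an a posteriori divisibility argument applied to the inverted system.
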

For a given knot or link, it is difficult to calculate its colored
HOMFLY-PT invariant in general. In \cite{Kaw1}, Kawagoe provided
some formulae for the colored HOMFLY-PT invariants
 with symmetric representations based on
the HOMFLY skein theory. Such formulae are useful to compute these
invariants of the knots and links with twisted strands of opposite
orientations. Furthermore, in the recent work \cite{Kaw2}, Kawagoe
obtained a rigorous single sum formula for the colored HOMFLY-PT
polynomial $\mathscr{H}_{N}(\mathcal{K}_p;q,a)$ of the twist knot
$\mathcal{K}_p$.

In this short note, based on Kawagoe's recent results \cite{Kaw2},
we apply the techniques in \cite{Mas} to derive the following
cyclomotic expansion formula for
$\mathscr{H}_{N}(\mathcal{K}_{p,s};q,a)$ of double twist knot
$\mathcal{K}_{p,s}$

\begin{figure}[!htb] \label{fig:doubletwist}
\begin{align*}
\includegraphics[width=120
pt]{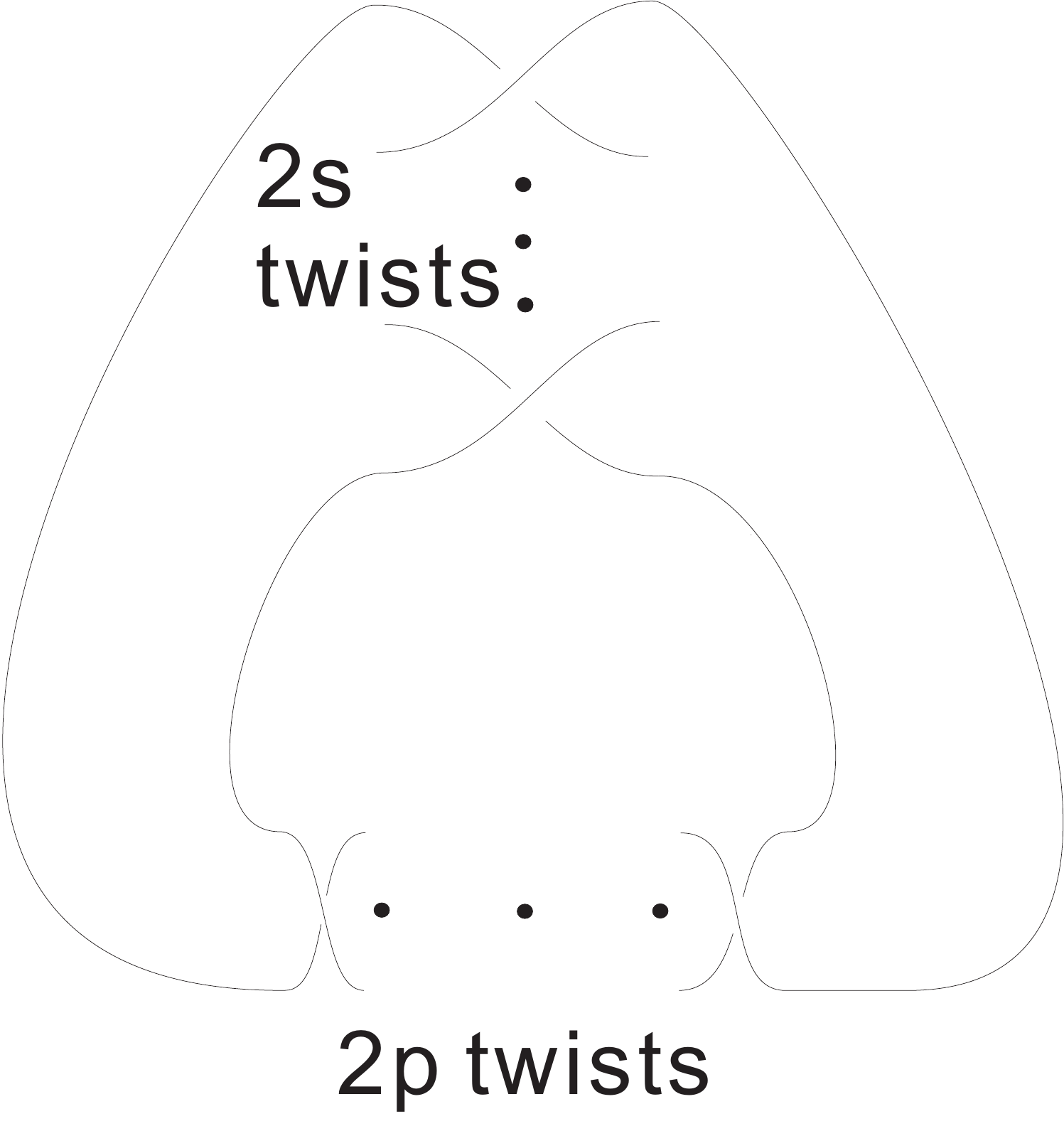}
\end{align*}
\caption{Double twist knot $\mathcal{K}_{p,s}$}
\end{figure}

\begin{theorem} \label{Theorem-main}
The colored HOMFLY-PT invariant of the double twist knot
$\mathcal{K}_{p,s}$ is given by
\begin{align} \label{formula-HN}
\mathscr{H}_{N}(\mathcal{K}_{p,s};q,a)=\sum_{k=0}^{N}(-1)^k(a^kq^{k(k-1)})^{2(p+s)}\tilde{C}_{k,k}^{(p)}\tilde{C}_{k,k}^{(s)}\left[
\begin{array}{@{\,}c@{\,}}N \\ k
\end{array} \right] \{ N+k-1;a \}_k \{k-2;a \}_k,
\end{align}
where the coefficient $\tilde{C}_{k,k}^{(p)}\in
\mathbb{Z}[q,q^{-1}]$, whose explicit expression is given by formula
(\ref{formula-tildec}).
\end{theorem}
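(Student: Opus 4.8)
The plan is to start from Kawagoe's single-sum formula for the twist knot $\mathcal{K}_p$, which is cited as the main input from \cite{Kaw2}. The double twist knot $\mathcal{K}_{p,s}$ differs from a single twist knot by the presence of two twist regions with framings $p$ and $s$, so the first step is to express $\mathscr{H}_N(\mathcal{K}_{p,s};q,a)$ as a suitable ``double'' version of Kawagoe's formula, presumably by gluing together two twist tangles along the skein-theoretic basis of the relevant tangle space. Concretely, I would write the invariant as a sum over intermediate labels $j$ (indexing the idempotents passing through the second twist region) of a product of two twist-region contributions, each carrying a factor of the form $(a^jq^{j(j-1)})^{2p}$ and $(a^jq^{j(j-1)})^{2s}$ coming from the eigenvalues of the full-twist operator on the symmetric-representation summands. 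This reduces the problem to understanding a single sum that is a product of two copies of the data appearing in the one-twist case.

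Once the double-sum expression is in hand, the key step is to apply the techniques of \cite{Mas} to rewrite the sum in the cyclotomic basis $\{N+k-1;a\}_k\{k-2;a\}_k$. The strategy here is to recognize that the twist-region eigenvalues, when expanded in the quantum-binomial/$q$-Pochhammer basis, produce precisely the bracket factors $\left[\begin{array}{@{\,}c@{\,}}N \\ k\end{array}\right]$ together with the two telescoping Pochhammer products. I would proceed by substituting the explicit form of the coefficients $\tilde{C}_{k,k}^{(p)}$ and $\tilde{C}_{k,k}^{(s)}$ from formula (\ref{formula-tildec}), then collapsing one of the summation indices using a $q$-hypergeometric summation identity (a Rogers--Szeg\H{o} or $q$-Vandermonde type identity), which is the standard mechanism by which a double sum of this shape reduces to a single sum indexed by $k$. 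The factorization of the twist contributions into $\tilde{C}_{k,k}^{(p)}\tilde{C}_{k,k}^{(s)}$ in the final formula strongly suggests that the two twist regions decouple after this change of basis, so the main algebraic content is verifying that each twist region independently contributes one such coefficient.

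The hard part, I expect, will be the index manipulation that converts Kawagoe's raw single-sum formula into the cyclotomic form, specifically controlling the combinatorial coefficients so that they assemble into the closed-form product $(-1)^k(a^kq^{k(k-1)})^{2(p+s)}\tilde{C}_{k,k}^{(p)}\tilde{C}_{k,k}^{(s)}\left[\begin{array}{@{\,}c@{\,}}N \\ k\end{array}\right]\{N+k-1;a\}_k\{k-2;a\}_k$. The factor $(a^kq^{k(k-1)})^{2(p+s)}$ must emerge from the twist eigenvalues evaluated at the top index $k$, which requires showing that after the summation the dependence on the intermediate label $j$ localizes to $j=k$; establishing this localization, together with tracking the signs and the precise powers of $q$ and $a$, is where the careful bookkeeping lies. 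A secondary obstacle will be confirming that the resulting coefficients $\tilde{C}_{k,k}^{(p)}$ are genuinely Laurent polynomials in $\mathbb{Z}[q,q^{-1}]$ as claimed, which should follow from the integrality already present in Kawagoe's formula once the change of basis is carried out, but must be checked to ensure no denominators survive. Finally, I would verify the specialization $\mathscr{H}_0 = 1$ and low-$N$ cases as a consistency check, and confirm that the $a=q^n$ specialization recovers the form predicted by Conjecture \ref{conjecture-cycl}.
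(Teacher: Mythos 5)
Your plan has the right general shape---a single sum over $k$ in which the two twist regions each contribute one factor---but the two mechanisms you propose for getting there are not the ones that work, and one of them papers over the main new content of the argument. First, the reduction to a single sum is not achieved by writing a double sum over intermediate labels and collapsing it with a $q$-Vandermonde or Rogers--Szeg\H{o} identity. The paper never produces a double sum at all: it expands $H_N$ in the basis $R_0,\dots,R_N$ of the skein of the solid torus and replaces the $p$-twist region by Masbaum's element $\omega_N^p=\sum_j t_{j,p}R_j$; the resulting pair $(R_i,R_j)$ sits in a cabled Whitehead-link configuration, and the orthogonality property $e_{R_i}(D_{j,j})=0$ for $i>j$ (Lemma \ref{Coro:vanishi}) kills every off-diagonal term outright. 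The ``localization to $j=k$'' you flag as the hard bookkeeping step is thus a structural vanishing in the skein module, not an identity of $q$-series, and it is unclear that the summation identity you invoke exists in the form you would need. Relatedly, you cannot simply start from Kawagoe's finished single-sum formula for $\mathcal{K}_p$ and ``double'' it; the second twist region has to be fed through the same skein machinery (Lemma \ref{lemma:ymn} and the expansion of $2s$ full twists), which is where the factor $(a^kq^{k(k-1)})^{2s}C_{k,k}^{(s)}$ actually comes from.

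Second, the integrality of $\tilde{C}_{k,k}^{(p)}$ is not something that ``should follow from the integrality already present in Kawagoe's formula'': Kawagoe's expression for $t_{n,p}$ has explicit denominators $\{i\}!\,\{n-i\}!\,\{n+i-1;a\}_{n+1}$, so integrality is exactly the nontrivial point. The paper's key new step is Lemma \ref{lemma:2ptwist1} together with Lemma \ref{lemma:T=C}: expanding the $2p$ full twists iteratively via Lemma \ref{lemma:alpha} shows that $T_{n,p}=C_{n,n}^{(p)}$ is a Laurent polynomial divisible by $\{n\}!$, which is what makes $\tilde{C}_{k,k}^{(p)}=C_{k,k}^{(p)}/\{k\}!$ lie in $\mathbb{Z}[q,q^{-1}]$ and hence yields the cyclotomic form. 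Without an argument of this kind your proposal establishes at best a single-sum formula with a priori rational-function coefficients, which does not prove the theorem as stated.
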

In particular, let $a=q^n$ in the above formula (\ref{formula-HN}),
we obtain
\begin{corollary}
The Conjecture \ref{conjecture-cycl} holds for the $SU(n)$-invariant
$J_{N}^{SU(n)}(\mathcal{K}_{p,s};q)$ of the double twist knot
$\mathcal{K}_{p,s}$.
\end{corollary}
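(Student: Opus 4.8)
The plan is to obtain the Corollary as a direct specialization of Theorem \ref{Theorem-main}. Since $J_{N}^{SU(n)}(\mathcal{K}_{p,s};q)$ is by definition the value of $\mathscr{H}_{N}(\mathcal{K}_{p,s};q,a)$ at $a=q^n$, I would set $a=q^n$ in formula (\ref{formula-HN}) and show that each summand reorganizes into the shape $C_{N+1,k}^{(n)}H_k^{(n)}(\mathcal{K}_{p,s})$ predicted by Conjecture \ref{conjecture-cycl}. The single input needed is the elementary substitution rule $\{m;a\}|_{a=q^n}=\{m+n\}$, which converts every bracket of type ``$;a$'' appearing in the theorem into an ordinary bracket with its argument shifted by $n$; this is exactly the device that turns the $a$-dependent factors of (\ref{formula-HN}) into the $\{\,\cdot\,\}$-factors defining $C_{N+1,k}^{(n)}$.

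First I would track the three $N$-dependent factors of the $k$-th summand separately. Applying the substitution rule to $\{N+k-1;a\}_k$ produces the ascending block $\{N+n\}\{N+n+1\}\cdots\{N+n+(k-1)\}$, which is precisely the second half of $C_{N+1,k}^{(n)}$. Next, expanding the quantum binomial $\left[\begin{array}{@{\,}c@{\,}}N \\ k\end{array}\right]=\{N\}\{N-1\}\cdots\{N-k+1\}/\{k\}!$ extracts the descending block $\{N-(k-1)\}\cdots\{N-1\}\{N\}$, the first half of $C_{N+1,k}^{(n)}$, up to the denominator $\{k\}!$. The remaining factor $\{k-2;a\}_k$ becomes, after the substitution, the $N$-\emph{independent} product $\{n+k-2\}\{n+k-3\}\cdots\{n-1\}$. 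Combining the descending block with the leftover denominator, I would rewrite
\begin{align*}
\left[\begin{array}{@{\,}c@{\,}}N \\ k\end{array}\right]\{N+k-1;a\}_k\{k-2;a\}_k\Big|_{a=q^n}
= C_{N+1,k}^{(n)}\cdot\frac{\{n+k-2\}\{n+k-3\}\cdots\{n-1\}}{\{k\}!}.
\end{align*}

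Collecting all pieces, the $k$-th coefficient of the resulting expansion becomes
\begin{align*}
H_k^{(n)}(\mathcal{K}_{p,s})
=(-1)^k\bigl(q^{nk}q^{k(k-1)}\bigr)^{2(p+s)}\tilde{C}_{k,k}^{(p)}\tilde{C}_{k,k}^{(s)}\,
\frac{\{n+k-2\}\{n+k-3\}\cdots\{n-1\}}{\{k\}!},
\end{align*}
which is visibly independent of $N$. The main point to verify, and the only real obstacle, is that this coefficient lies in $\mathbb{Z}[q,q^{-1}]$. I would argue that the last quotient is nothing but the quantum binomial coefficient $\left[\begin{array}{@{\,}c@{\,}}n+k-2 \\ k\end{array}\right]$, hence a Laurent polynomial with integer coefficients; that the prefactor $\bigl(q^{nk}q^{k(k-1)}\bigr)^{2(p+s)}$ is a (possibly negative) integer power of $q$ after the substitution $a=q^n$; and that $\tilde{C}_{k,k}^{(p)},\tilde{C}_{k,k}^{(s)}\in\mathbb{Z}[q,q^{-1}]$ by Theorem \ref{Theorem-main} through (\ref{formula-tildec}). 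Since $\mathbb{Z}[q,q^{-1}]$ is closed under products, $H_k^{(n)}(\mathcal{K}_{p,s})\in\mathbb{Z}[q,q^{-1}]$; finally, inspecting the $k=0$ term (where both bracket products and $\{k\}!$ reduce to $1$) gives $C_{N+1,0}^{(n)}=1$ and $H_0^{(n)}(\mathcal{K}_{p,s})=1$, so all hypotheses of Conjecture \ref{conjecture-cycl} are met for $\mathcal{K}_{p,s}$.
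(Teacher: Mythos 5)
Your proposal is correct and follows the paper's own route: the paper derives the corollary simply by setting $a=q^n$ in formula (\ref{formula-HN}), and your computation just makes explicit the bookkeeping (the rule $\{m;a\}|_{a=q^n}=\{m+n\}$, the splitting of $\left[\begin{array}{@{\,}c@{\,}}N \\ k\end{array}\right]\{N+k-1;a\}_k$ into the two halves of $C_{N+1,k}^{(n)}$, and the identification of the leftover $N$-independent factor as a quantum binomial) that the paper leaves to the reader. The verification that $H_k^{(n)}(\mathcal{K}_{p,s})\in\mathbb{Z}[q,q^{-1}]$ and that the $k=0$ term gives $1$ is accurate.
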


\section{Preliminaries}
In this section, we briefly review Kawagoe's recent work \cite{Kaw2}
and fix the notations. Let $a$ and $q$ be two non-zero variables in
$\mathbb{C}$. For an integer $n$, we define the symbols by
\begin{align*}
[n] &= \frac{q^n-q^{-n}}{q-q^{-1}}, \qquad \{ n \} =  q^{n} -
q^{-n}, \qquad \{ n ; a \} =  aq^{n}-a^{-1}q^{-n}.
\end{align*}
For integers $n >0 , i \geq 0$, we introduce the products of  $i$
terms of these symbols by
\begin{align*}
[n]_{i} &= [n] [n-1] \cdots [n-i+1], \\
\{ n \}_{i} &=  \{ n \} \{ n-1 \} \cdots \{ n-i+1 \},  \\
\{ n ; a \}_{i} &=  \{ n; a \} \{ n-1; a \} \cdots \{ n-i+1; a \},  \\
\{ -n ; a \}_{i} &=  \{ -n; a \} \{ -n+1; a \} \cdots \{ -n+i-1; a
\},
\end{align*}
which are defined to be $1$ if $i = 0$. Furthermore, we let
\begin{align*}
[n]! =[n]_{n}, \quad \{ n \} ! =  \{ n \}_{n}, \quad \left[
\begin{array}{@{\,}c@{\,}}n \\ i \end{array} \right] =
\frac{[n]!}{[i]! [n-i]!}.
\end{align*}

The HOMFLY skein module $\mathcal{S}(M)$ of a oriented 3-manifold
$M$ is the free $\mathbb{C}$-module modulo the submodule generated
by the following HOMFLY-PT skein relations:

\begin{enumerate}
\setlength{\itemsep}{3mm}
\item  $L\cup U = \displaystyle \frac{ \{0 ; a \} }{ \{1\} } L$,
 and $\varnothing = 1$, 
\item \begin{minipage}{15pt}
        \begin{picture}(15,15) 
            \qbezier(0,15)(0,15)(15,0)
            \qbezier(0,0)(0,0)(6,6)
            \qbezier(9,9)(9,9)(15,15)
            \put(15,0){\vector(1,-1){0}}
            \put(15,15){\vector(1,1){0}}
        \end{picture}
\end{minipage}
$\, - \,$
\begin{minipage}{15pt}
        \begin{picture}(15,15) 
            \qbezier(0,0)(0,0)(15,15)
            \qbezier(0,15)(0,15)(6,9)
            \qbezier(9,6)(9,6)(15,0)
            \put(15,0){\vector(1,-1){0}}
            \put(15,15){\vector(1,1){0}}
        \end{picture}
\end{minipage}
$\, = (q-q^{-1}) \,$
\begin{minipage}{30pt}
        \begin{picture}(15,15) 
            \qbezier(0,15)(7.5,7.5)(15,15)
            \qbezier(0,0)(7.5,7.5)(15,0)
            \put(15,0){\vector(3,-2){0}}
            \put(15,15){\vector(3,2){0}}
        \end{picture}
\end{minipage}\hspace*{-3mm}, 
\item \begin{minipage}{18pt}
        \begin{picture}(18,15) 
            \qbezier(0,11.2)(10.5,9)(10.5,4.5)
            \qbezier(10.5,4.5)(10.5,0)(7.5,0)
            \qbezier(7.5,0)(4.5,0)(4.5,4.5)
            \qbezier(4.5,4.5)(4.5,5.2)(6,7.5)
            \qbezier(9,9.7)(10.5,11.2)(15,11.2)
            \put(18,12.2){\vector(4,1){0}}
        \end{picture}
\end{minipage}
$\, = a \,$
\begin{minipage}{18pt}
        \begin{picture}(15,15) 
            \qbezier(0,7.5)(0,7.5)(15,7.5)
            \put(16.5,7.5){\vector(1,0){0}}
        \end{picture}
\end{minipage}
$\, , \quad$
\begin{minipage}{18pt}
        \begin{picture}(18,15) 
            \qbezier(0,11.2)(4.5,11.2)(6,9.7)
            \qbezier(9,7.5)(10.5,5.2)(10.5,4.5)
            \qbezier(10.5,4.5)(10.5,0)(7.5,0)
            \qbezier(4.5,4.5)(4.5,0)(7.5,0)
            \qbezier(4.5,4.5)(4.5,9)(15,11.2)
            \put(18,12.2){\vector(4,1){0}}
        \end{picture}
\end{minipage}
$\; = a^{-1} \,$
\begin{minipage}{18pt}
        \begin{picture}(15,15) 
            \qbezier(0,7.5)(0,7.5)(15,7.5)
            \put(16.5,7.5){\vector(1,0){0}}
        \end{picture}
\end{minipage}$\, .$
\end{enumerate}

For an integer $n \geq 1$, we recursively define the $n$-th
$q$-symmetrizer by Figure \ref{fig:qsym}, where the $q$-symmetrizer
is denoted by a white rectangle.  The integer $n$ beside an arc
means $n$ copies of the arc.

\begin{figure} \label{fig:qsym}
\begin{align*}
\raisebox{-14pt}{
\includegraphics[width=28 pt]{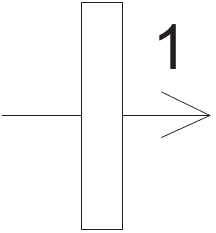}}=\raisebox{-1pt}{
\includegraphics[width=28 pt]{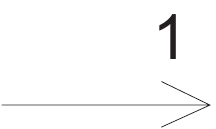}}
\end{align*}
\begin{align*}
\raisebox{-14pt}{
\includegraphics[width=28 pt]{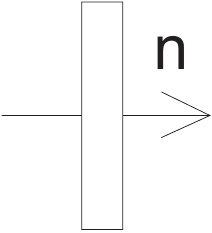}}=\frac{q^{-n+1}}{[n]}\raisebox{-14pt}{
\includegraphics[width=20 pt]{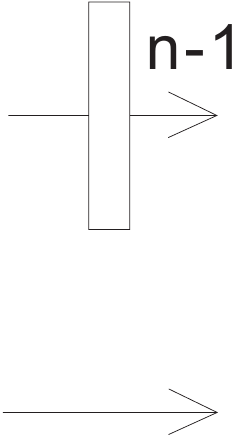}}+\frac{q^{[n-1]}}{[n]}\raisebox{-20pt}{
\includegraphics[width=80 pt]{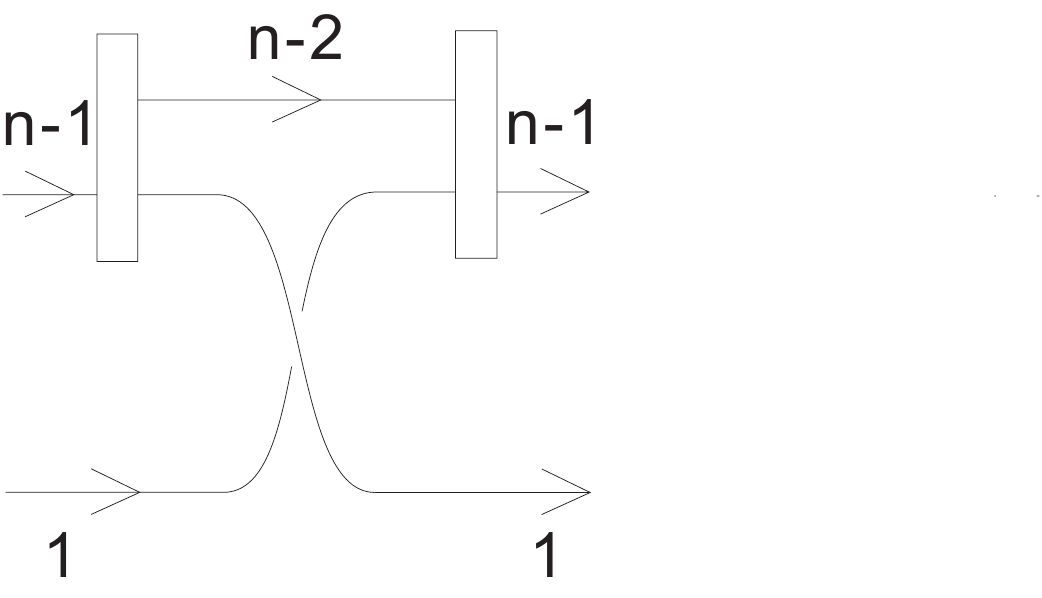}}
\end{align*}
\caption{$n$-th $q$-symmetrizer}
\end{figure}

Using the $m$-th and $n$-th $q$-symmetrizers, Kawagoe introduced the
$(m,n)$-th $q$-symmetrizer  by Figure 3, where $x_{m,n}^i$ is given
by
\begin{align*}
x_{m,n}^i = (-1)^i \left[ \begin{array}{@{\,}c@{\,}}m \\ i
\end{array} \right] \left[ \begin{array}{@{\,}c@{\,}}n \\ i
\end{array} \right] \frac{ \{ i \}!  }{  \{ m+n -2;a \}_i  }.
\end{align*}
\begin{figure} \label{fig:mnqsym}
\begin{align*}
\raisebox{-25pt}{
\includegraphics[width=28 pt]{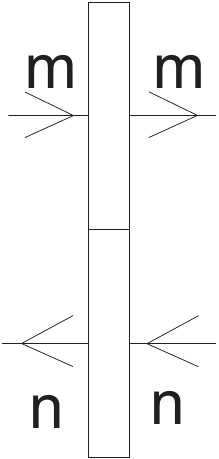}}=\sum_{i=0}^{\min{\{m,n\}}}
x_{m,n}^i\raisebox{-25pt}{\includegraphics[width=55
pt]{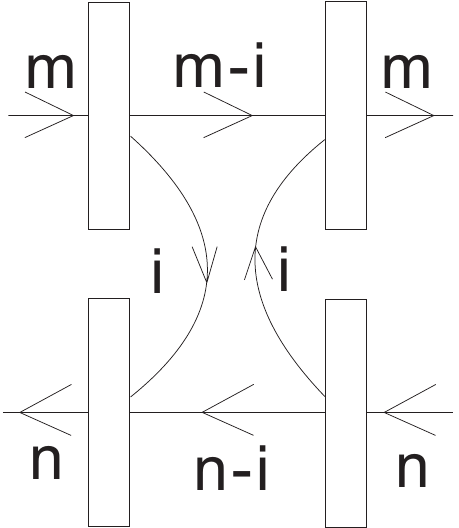}},
\end{align*}
\caption{$(m,n)$-th $q$-symmetrizer}
\end{figure}

According to their definitions, the $n$-th $q$-symmetrizer and
$(m,n)$-th $q$-symmetrizer carry the following properties

\begin{align} \label{formula-crossing}
\raisebox{-14pt}{
\includegraphics[width=58 pt]{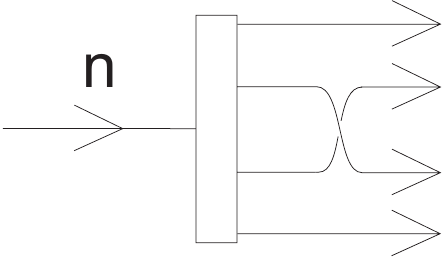}}=\raisebox{-14pt}{
\includegraphics[width=58 pt]{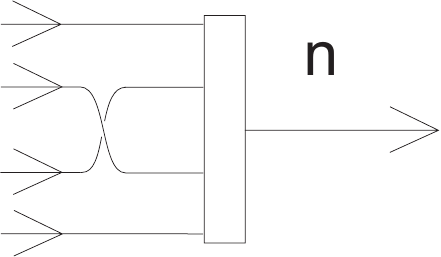}}=q\raisebox{-14pt}{
\includegraphics[width=30 pt]{oneboxn.pdf}}
\end{align}
\begin{align} \label{formula-vanishing}
\raisebox{-12pt}{
\includegraphics[width=28 pt]{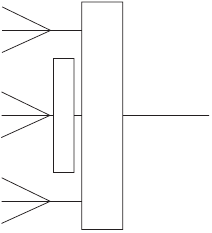}}=\raisebox{-12pt}{
\includegraphics[width=28 pt]{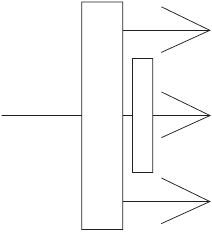}}=\raisebox{-12pt}{
\includegraphics[width=28 pt]{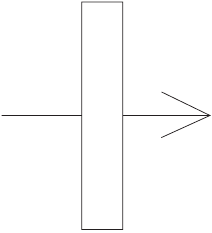}}
\end{align}

\begin{align} \label{formula-mnidempotent}
\raisebox{-14pt}{
\includegraphics[width=32 pt]{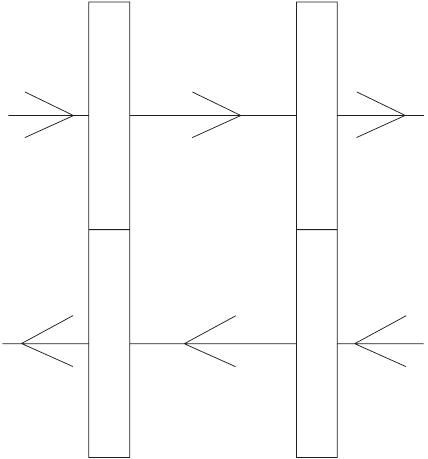}}=\raisebox{-14pt}{
\includegraphics[width=16 pt]{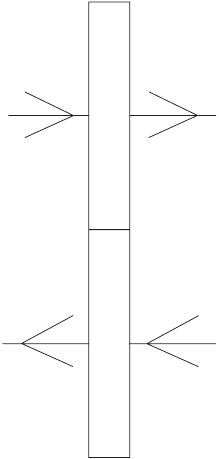}}
\end{align}

\begin{align} \label{formula-mnvanishing}
\raisebox{-14pt}{
\includegraphics[width=18 pt]{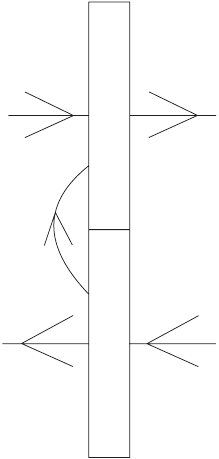}}=\raisebox{-14pt}{
\includegraphics[width=18 pt]{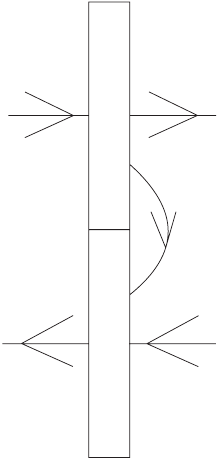}}=0
\end{align}

We denote the skein module of the solid torus $S^1 \times D^2$  by
$\mathcal{S}$. For a circle along $S^1$ of the solid torus, let $H_n
\in \mathcal{S}$ be the $n$ copies of the circles inserted by the
$n$-th $q$-symmetrizer, and $H_{n,n}$ be two copies of $H_n$, one is
anticlockwise and the other is clockwise. Let $D_{m,n}$ by $m$
copies of the anticlockwise circle and $n$ copies of the clockwise
inserted by the $(m,n)$-th $q$-symmetrizer. Kawagoe introduced two
submodule of $\mathcal{S}$,  $\mathcal{H}_{n,n}$ and
$\mathcal{D}_{n,n}$, which are spanned by $H_{i,i}$ and ${D}_{i,i}$
for $i=0,\ldots,n$, respectively. Then, he proved that
$\mathcal{D}_{n,n} = \mathcal{H}_{n,n}$. Let $\langle \; \; \rangle$
be the linear map on $\mathcal{S}$  defined by  evaluating it in
$S^3$.

As in \cite{Mas}, Kawagoe introduced the twist map $t : \mathcal{S}
\to \mathcal{S}$ induced by one right-handed twist on the solid
torus as shown in the right-hand side of Figure 4, where the twist
is acting on the bottom of the solid torus. Similarly, let $t^{-1}$
be the twist map induced by one left-hand twist. For $x \in
\mathcal{S}$, let $e_{x} : \mathcal{S} \to \mathcal{S}$ be the
encircling map which encircles an element of $\mathcal{S}$ by $x$ as
shown in the left-hand side of Figure 4, where $x$ slides into the
bottom of the solid torus and encircles the element.

\begin{figure}[!htb]\label{fig:fulltwist}
\begin{align}
\raisebox{-10pt}{
\includegraphics[width=80 pt]{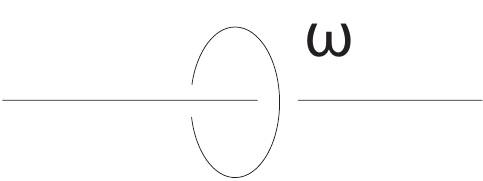}}=\raisebox{-5pt}{
\includegraphics[width=80 pt]{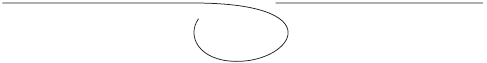}}
\end{align}
\caption{The encircling map $e_{\omega}$ and a positive full twist}
\end{figure}

We need the following lemmas which are borrowed from \cite{Kaw2}
directly.
\begin{lemma}[cf. Lemma 2.6 in \cite{Kaw2} ]  \label{lemma:Dmn}
For positive integers $m \geq n $, we have the following:
\begin{align*}
\langle D_{m,n} \rangle =\frac{ \{ m + n-1; a \} \{ m-2;a \}_{m-1}
\{ n-2;a \}_{n-1} \{-1;a\}  }{   \{ m \}! \{ n \}! }.
\end{align*}
\end{lemma}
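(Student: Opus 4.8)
The plan is to evaluate $\langle D_{m,n}\rangle$ by first expanding the $(m,n)$-th $q$-symmetrizer into its defining sum and then evaluating each resulting closed diagram in $S^3$. Inserting the expansion with coefficients $x_{m,n}^i$ into the closure that defines $D_{m,n}$ gives
$$\langle D_{m,n}\rangle=\sum_{i=0}^{n}x_{m,n}^i\,\langle D_{m,n}^{(i)}\rangle,$$
where $D_{m,n}^{(i)}$ denotes the closed diagram coming from the $i$-th term (the $m$-box and the $n$-box joined by $i$ turn-back arcs, with the remaining $m-i$ and $n-i$ strands closing into circles); the sum runs only to $n$ because $m\ge n$. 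The first ingredient I would fix is the colored unknot value $\langle H_j\rangle$: running the recursive definition of the $n$-th $q$-symmetrizer together with the HOMFLY-PT skein relation and the curl relation (\ref{formula-crossing}) yields $\langle H_j\rangle=\{j-1;a\}_j/\{j\}!$, the basic building block for everything that follows.

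The second step is the per-term reduction. For fixed $i$, I would simplify $D_{m,n}^{(i)}$ using the idempotence (\ref{formula-mnidempotent}) and the two vanishing relations (\ref{formula-vanishing}), (\ref{formula-mnvanishing}): the turn-back arcs force the two symmetrizer boxes to fuse while the cups and caps annihilate all but the through-configuration. After this collapse each $\langle D_{m,n}^{(i)}\rangle$ becomes a product of colored-unknot evaluations of the type $\langle H_j\rangle$, hence an explicit ratio of $\{\,\cdot\,;a\}$-symbols over $\{\,\cdot\,\}$-factorials in $m,n,i$; for instance the unlinked endpoint gives $\langle D_{m,n}^{(0)}\rangle=\langle H_m\rangle\langle H_n\rangle$, and a direct check at $n=1$ gives $\langle D_{m,1}^{(1)}\rangle=\langle H_m\rangle$. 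I expect this geometric reduction to follow mechanically once the picture defining the $(m,n)$-th $q$-symmetrizer is unpacked, using only the four stated properties.

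With the per-term values in hand, the proof reduces to the identity
$$\sum_{i=0}^{n}(-1)^i\left[\begin{array}{@{\,}c@{\,}}m\\i\end{array}\right]\left[\begin{array}{@{\,}c@{\,}}n\\i\end{array}\right]\frac{\{i\}!}{\{m+n-2;a\}_i}\,\langle D_{m,n}^{(i)}\rangle=\frac{\{m+n-1;a\}\{m-2;a\}_{m-1}\{n-2;a\}_{n-1}\{-1;a\}}{\{m\}!\{n\}!}.$$
This is a terminating $q$-hypergeometric sum in $i$, and collapsing it to the single product on the right is the main obstacle. I would attack it either by a $q$-Chu–Vandermonde/telescoping argument, rewriting the summand as a difference $F(i+1)-F(i)$ so that only the boundary term survives, or by induction on $m$ and $n$, where the recursive definition of the $q$-symmetrizers produces a recursion relating $\langle D_{m,n}\rangle$ to $\langle D_{m-1,n}\rangle$ and the claimed closed form is verified to satisfy it against the base cases $\langle D_{m,0}\rangle=\langle H_m\rangle$ and $\langle D_{1,1}\rangle$. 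The bookkeeping of the $a$-powers introduced by the framing relation (\ref{formula-crossing}) during the reduction is the place most likely to hide sign or index-shift errors, so I would first pin down the small cases $m=n=1$ and $(m,n)=(2,1)$ to calibrate the normalization before running the general summation.
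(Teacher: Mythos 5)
The paper offers no proof of this lemma at all: it is imported verbatim from Kawagoe (Lemma 2.6 of \cite{Kaw2}), and the text explicitly states that the lemmas of Section 2 are ``borrowed from \cite{Kaw2} directly.'' So there is no in-paper argument to compare yours against; your proposal has to stand on its own. Its skeleton --- expand the $(m,n)$-th $q$-symmetrizer into the $i$-indexed basis with coefficients $x_{m,n}^i$, evaluate each closed term, resum --- is a legitimate route, and your calibration data are correct: $\langle H_j\rangle=\{j-1;a\}_j/\{j\}!$ follows from $\mathscr{H}_N(U)=1$, $\langle D_{m,n}^{(0)}\rangle=\langle H_m\rangle\langle H_n\rangle$, $\langle D_{m,1}^{(1)}\rangle=\langle H_m\rangle$, and one can check directly that these reproduce the stated product for $(m,n)=(1,1)$ and $(2,1)$.

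The gap is that the two computations that constitute the entire content of the lemma are left unexecuted. First, you never derive the general value of $\langle D_{m,n}^{(i)}\rangle$; you assert it ``follows mechanically'' from (\ref{formula-crossing})--(\ref{formula-mnvanishing}), but those properties give only idempotence and vanishing, not the coefficient produced when $i$ turn-back arcs joining the $m$-box to the $n$-box are traced out. That coefficient is a theta-type evaluation --- a partial trace of the symmetrizer over oppositely oriented capping arcs, which is precisely where the $a$-powers from the third skein relation enter --- and it must be computed in closed form in $m,n,i$, not merely named; your two data points $i=0$ and $(n,i)=(1,1)$ do not determine it. Second, the terminating sum over $i$ that you yourself flag as ``the main obstacle'' \emph{is} the lemma: offering a choice between a $q$-Chu--Vandermonde/telescoping argument and an induction on $m$ and $n$, without exhibiting the antidifference $F(i)$ or the recursion and checking the claimed product against it, leaves the proof where it started. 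As written this is a plausible plan with correct boundary checks, not a proof; to complete it you would need to either carry out the summation or follow Kawagoe's own derivation in \cite{Kaw2}.
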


For $i=0,\ldots,n$, Kawagoe recursively defined elements $R_i \in
\mathcal{S}$ as follows:
\begin{align*}
R_0 &= H_0 = 1, \\
R_n &= H_n - \sum_{i=0}^{n-1} \frac{ \{ n-1+i;a \}_{n-i} }{ \{
n-i\}! } R_i, \ \text{for} \ n\geq 1.
\end{align*}
Conversely, each $H_i$ can be expressed by
\begin{align*}
H_i  =  \sum_{j=0}^{i} \frac{ \{ i-1+j;a \}_{i-j} }{ \{ i-j\}! } R_j.  
\end{align*}
These elements $R_i$ play important role in calculating the colored
HOMFLY-PY polynomial of double twist knot. Actually, the behavior of
$H_n$ concerning the encircling map is very complicated, so Kawagoe
introduced $R_n$ as a linear combination of $H_i (i=0,\ldots, n)$
which simplifies the computations.

\begin{lemma}[cf. Proposition 3.4 and Corollary 3.5 in \cite{Kaw2}]\label{Coro:vanishi}
For an integer $i\geq 0$, we have
$e_{R_i}(D_{n,n})=\theta_{n,i}D_{n,n}$, where
$\theta_{n,i}=\{n\}_{i}=\{n\}_{i}\{n_i-2,a\}_{i}$, and for $i>n$,
$e_{R_i}(D_{n,n})=0$.
\end{lemma}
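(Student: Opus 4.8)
The plan is to prove the eigenvector property first, extract the eigenvalue by closing the picture up in $S^{3}$, and then deduce the vanishing for $i>n$ as a by-product of the eigenvalue formula. Since $e_{R_{i}}$ is $\mathbb{C}$-linear and $\{D_{j,j}\}_{j=0}^{n}$ is a basis of $\mathcal{D}_{n,n}=\mathcal{H}_{n,n}$, the operator $e_{R_{i}}$ is given by a matrix in this basis, and the content of the statement is that this matrix is diagonal with $D_{n,n}$ as an eigenvector. I would argue this from the fact that $D_{n,n}$ is the closure of the primitive idempotent $(n,n)$-symmetrizer: encircling a simple object by a meridian (here a circle decorated by the symmetrizer, i.e. an $H_{i}$, and hence any linear combination $R_{i}$) is a central operation, so by a Schur-type argument it acts by a scalar. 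Concretely, after $e_{R_{i}}$ slides $R_{i}$ into the bottom of the solid torus and wraps it around the two bundles of $n$ strands, one fuses back through the $(n,n)$-symmetrizer using the idempotent relation \eqref{formula-mnidempotent} and kills every term not proportional to $D_{n,n}$ with the vanishing relations \eqref{formula-vanishing} and \eqref{formula-mnvanishing}; what remains is $\theta_{n,i}D_{n,n}$.

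Once the eigenvector property is in hand, the scalar is forced by evaluating in $S^{3}$: applying $\langle\ \rangle$ to $e_{R_{i}}(D_{n,n})=\theta_{n,i}D_{n,n}$ gives $\theta_{n,i}=\langle e_{R_{i}}(D_{n,n})\rangle/\langle D_{n,n}\rangle$, and $\langle D_{n,n}\rangle$ is supplied by Lemma \ref{lemma:Dmn} with $m=n$. I would first compute the eigenvalues of the building blocks $e_{H_{i}}$, where $H_{i}$ is the symmetrizer circle encircling the $2n$ strands of $D_{n,n}$; this is a Hopf-link-type evaluation that reduces, via the crossing property \eqref{formula-crossing} and fusion, to ratios of the bracket values of Lemma \ref{lemma:Dmn}. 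Feeding these into the recursion $R_{n}=H_{n}-\sum_{i=0}^{n-1}\frac{\{n-1+i;a\}_{n-i}}{\{n-i\}!}R_{i}$ then converts the $H_{i}$-eigenvalues into the $R_{i}$-eigenvalues. The nontrivial combinatorial point is that this Habiro-type combination telescopes the messy Hopf-link values into the clean cyclotomic product $\theta_{n,i}=\{n\}_{i}\{n+i-2;a\}_{i}$; I expect to verify this by induction on $i$, or by recognizing a $q$-binomial (Chu--Vandermonde type) identity hidden in the coefficients $\frac{\{n-1+i;a\}_{n-i}}{\{n-i\}!}$.

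The vanishing for $i>n$ then falls out for free. The $q$-factor of the eigenvalue is $\{n\}_{i}=\{n\}\{n-1\}\cdots\{n-i+1\}$, which contains $\{n-n\}=\{0\}=0$ as soon as $i\geq n+1$, so $\theta_{n,i}=0$ and hence $e_{R_{i}}(D_{n,n})=0$; geometrically this reflects that encircling by $R_{i}$ with $i>n$ pushes more than $n$ strands through an object that symmetrizes at most $n$ of them, which dies by \eqref{formula-mnvanishing}. I expect the main obstacle to be the eigenvalue computation rather than the eigenvector step: one must both justify the Schur-type scalar claim inside the skein module (instead of merely invoking semisimplicity of the ambient category) and carry out the telescoping identity that produces the exact product form, keeping the $\{\,\cdot\,\}$ and $\{\,\cdot\,;a\}$ factors separated correctly throughout. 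The eigenvector claim is conceptually routine once one commits to the idempotent picture, but this algebraic bookkeeping is where the real work lies.
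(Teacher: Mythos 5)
This lemma is not proved in the paper at all: it is quoted verbatim from Kawagoe \cite{Kaw2} (Proposition 3.4 and Corollary 3.5), so there is no in-paper argument to measure your proposal against. The strategy you describe --- $D_{n,n}$ is an eigenvector of every encircling map because it is the closure of the idempotent $(n,n)$-symmetrizer; the eigenvalue is read off by evaluating in $S^3$ and dividing by $\langle D_{n,n}\rangle$ from Lemma \ref{lemma:Dmn}; the $H_j$-eigenvalues are pushed through the recursion defining $R_i$; and the vanishing for $i>n$ falls out of the factor $\{n\}_i$ --- is exactly the Masbaum-style route the cited source takes. Your reading of the garbled eigenvalue as $\theta_{n,i}=\{n\}_i\{n+i-2;a\}_i$ is also the correct repair of the typo: combined with Lemma \ref{lemma:Dmn} it reproduces the quantity $\theta_{k,k}\langle D_{k,k}\rangle=\{2k-1;a\}_{2k}\{k-2;a\}_k/\{k\}!$ that Section 3 actually uses, which is a good consistency check.

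That said, as written your proposal is an itinerary rather than a proof, and the two places you yourself flag are precisely where all of the content of Kawagoe's Proposition 3.4 lives. First, the Schur-type step needs the $(n,n)$-symmetrizer to be a \emph{minimal} idempotent in the mixed relative skein algebra, i.e. that $e\,T\,e$ is a scalar multiple of $e$ for every $(n,n)$-tangle $T$; the properties \eqref{formula-mnidempotent} and \eqref{formula-mnvanishing} quoted in this paper do not by themselves deliver that, and in \cite{Kaw2} the diagonal action is obtained by explicit fusion computations (of the type in Lemma \ref{lemma:ymn}) rather than by appealing to semisimplicity of an ambient category, which is not available inside the skein module. Second, the telescoping of the cabled-Hopf-link eigenvalues of $e_{H_j}$ through the coefficients $\{i-1+j;a\}_{i-j}/\{i-j\}!$ into the product $\{n\}_i\{n+i-2;a\}_i$ is a genuinely nontrivial $q$-identity, and you must establish it for all $i\geq 0$, not only $i\leq n$, because your derivation of the vanishing statement relies on the product formula precisely in the range $i>n$. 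Neither point is a wrong turn, but until both are carried out the lemma is not proved; in the context of this paper the honest course is the one the authors take, namely to cite \cite{Kaw2}.
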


\begin{lemma}[cf. Lemma 2.1 in \cite{Kaw2}]\label{lemma:alpha}
We have the following formula
\begin{align} \label{formula-mn2mn}
\raisebox{-25pt}{
\includegraphics[width=100
pt]{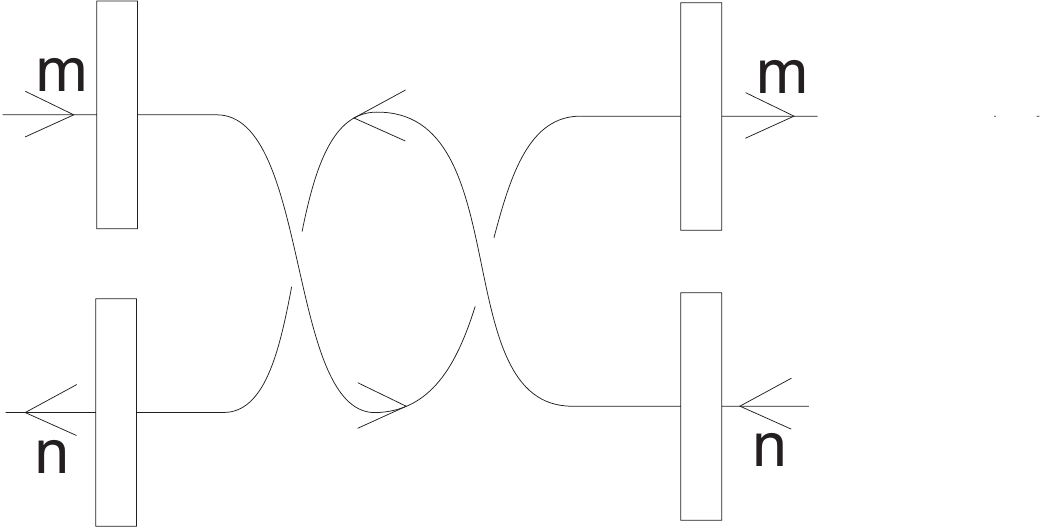}}=\sum_{i=0}^{\min{\{m,n\}}}\alpha_{m,n}^i(q,a)
\raisebox{-25pt}{\includegraphics[width=50 pt]{mnmnii.pdf}},
\end{align}
where
\begin{align} \label{formula:alpha}
 \alpha_{m,n}^i(a,q) =
(-a)^{-i} q^{-i (m+n) +\frac{i(i+3)}{2}} \frac{  \{ m \}_i \{ n \}_i
}{   \{ i \} !        }.
\end{align}
\end{lemma}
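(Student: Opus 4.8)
The plan is to prove (\ref{formula-mn2mn}) as an identity of tangles in the corner of the HOMFLY skein module that is capped at top and bottom by an $(m,n)$-th $q$-symmetrizer. I would first establish that the left-hand diagram must be a linear combination of the through-strand tangles $D_{m,n}^{(i)}$ on the right. Once both ends carry $(m,n)$-symmetrizers, the idempotence (\ref{formula-mnidempotent}) absorbs whatever tangle sits between them, while the turnback relation (\ref{formula-mnvanishing}) annihilates every diagram in which a strand returns to the box it left; the only surviving invariant of the intermediate tangle is then the number $i$ of strands that pass from the top box to the bottom box. Hence the twisted diagram equals $\sum_{i=0}^{\min\{m,n\}}\alpha_{m,n}^i\,D_{m,n}^{(i)}$ for some scalars $\alpha_{m,n}^i\in\mathbb{C}(q,a)$, and the content of the lemma is their explicit evaluation.

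The coefficients I would compute by a direct skein reduction, organized as an induction on $i$ using the recursive structure of the symmetrizers (the $n$-th symmetrizer recursion of Figure 2 together with the expansion of Figure 3). Resolving the twist region one crossing at a time, each crossing is either absorbed into a symmetrizer, contributing the scalar $q$ of the framing relation (\ref{formula-crossing}), or smoothed by the HOMFLY-PT relation, contributing $(q-q^{-1})$; each kink produced in the process contributes a power of $a$ through relation (3). Tracking these factors, the writhe carried by the $i$ transferred strands assembles into $(-a)^{-i}$, the $q$-weights from (\ref{formula-crossing}) on the two boxes together with the self-crossings among the transferred strands assemble into $q^{-i(m+n)+i(i+3)/2}$, and the number of ways of choosing the $i$ strands from the $m$- and $n$-bundles (already visible in the coefficients $x_{m,n}^i$ of Figure 3) produces the rational factor $\{m\}_i\{n\}_i/\{i\}!$. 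The inductive step then amounts to checking that the ratio $\alpha_{m,n}^i/\alpha_{m,n}^{i-1}$ matches the one predicted by (\ref{formula:alpha}).

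As an independent determination and a check on the bookkeeping, I would pin the $\alpha_{m,n}^i$ down by applying the evaluation $\langle\;\rangle$, possibly after encircling, to suitable closures of both sides: closing (\ref{formula-mn2mn}) against the family $D_{m,n}^{(j)}$ yields a system of scalar equations whose matrix is nondegenerate and whose entries are computable in closed form from Lemma \ref{lemma:Dmn}, with the diagonalization of the encircling maps recorded in Lemma \ref{Coro:vanishi} separating the $i$-components. Solving this system recovers (\ref{formula:alpha}) without resolving any crossings. I expect the main obstacle to lie not in either method individually but in the coefficient bookkeeping common to both: cleanly separating the genuine braiding eigenvalue attached to each summand from the change-of-basis factors relating the geometric tangles $D_{m,n}^{(i)}$ to the minimal idempotents of the symmetrizer algebra. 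The safest way to control this is to fix the overall normalization on the transparent term $i=0$ — where there are no transferred strands and the computation is just two applications of (\ref{formula-crossing}) — and then propagate it through the inductive ratios, using the evaluation route as a cross-check at each stage.
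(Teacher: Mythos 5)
The paper does not prove this statement at all: Lemma \ref{lemma:alpha} is one of the results the authors ``borrow from \cite{Kaw2} directly'' (it is Lemma 2.1 there), so there is no in-paper argument to compare yours against. Judged on its own merits, your proposal correctly identifies the standard skeleton of such a proof --- sandwiching between symmetrizers, together with the turnback annihilation, forces the left-hand side into the span of the through-strand diagrams $D_{m,n}^{(i)}$, so the whole content of the lemma is the evaluation of the coefficients --- but it then stops exactly where the work begins. The sentence claiming that the writhe ``assembles into'' $(-a)^{-i}$, the $q$-weights ``assemble into'' $q^{-i(m+n)+i(i+3)/2}$, and the strand-counting ``produces'' $\{m\}_i\{n\}_i/\{i\}!$ is an assertion of the answer, not a derivation: you never set up the recursion (in $i$, or in $m$ and $n$ via the symmetrizer recursion of Figure 2), never compute the ratio $\alpha^i_{m,n}/\alpha^{i-1}_{m,n}$ that your induction is supposed to verify, and never check a base case beyond $i=0$. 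Since the closed form of $\alpha^i_{m,n}$ is the entire point of the lemma, this is a genuine gap rather than an omitted routine detail.

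The proposed cross-check also leans on tools the paper does not supply. Pairing both sides of (\ref{formula-mn2mn}) against the family $D_{m,n}^{(j)}$ requires the full Gram matrix $\langle D_{m,n}^{(i)}\,D_{m,n}^{(j)}\rangle$, or at least its triangularity and diagonal entries, whereas Lemma \ref{lemma:Dmn} evaluates only the single closure $\langle D_{m,n}\rangle$; and Lemma \ref{Coro:vanishi} diagonalizes the encircling maps only on the balanced elements $D_{n,n}$ of the solid torus, so it cannot separate the $i$-components for general $m\neq n$. Finally, note that the left-hand diagram of (\ref{formula-mn2mn}) is built from \emph{negative} crossings (see the Remark following the lemma), so each crossing absorbed via (\ref{formula-crossing}) and each kink removed via relation (3) contributes $q^{-1}$ and $a^{-1}$ respectively; your bookkeeping paragraph leaves this orientation convention implicit, and it is precisely the kind of sign that an ``assembly'' argument gets wrong unless the recursion is written out.
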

\begin{remark}
In formula (\ref{formula-mn2mn}), when all the negative crossings
change into positive crossings, the coefficients
$\alpha_{m,n}^i(q,a)$ will change into
$\alpha_{m,n}^i(q^{-1},a^{-1})$.
\end{remark}

\begin{lemma}[cf. Lemma 4.1 in \cite{Kaw2}]\label{lemma:ymn} The following holds:
\begin{align*}
\raisebox{-20pt}{
\includegraphics[width=18
pt]{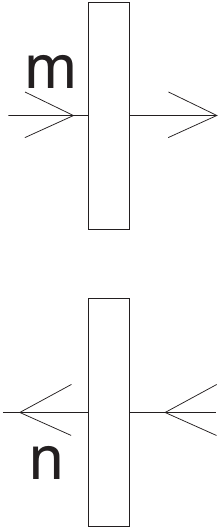}}=\sum_{i=0}^{\min{\{m,n\}}}y_{m,n}^i
\raisebox{-22pt}{\includegraphics[width=50
pt]{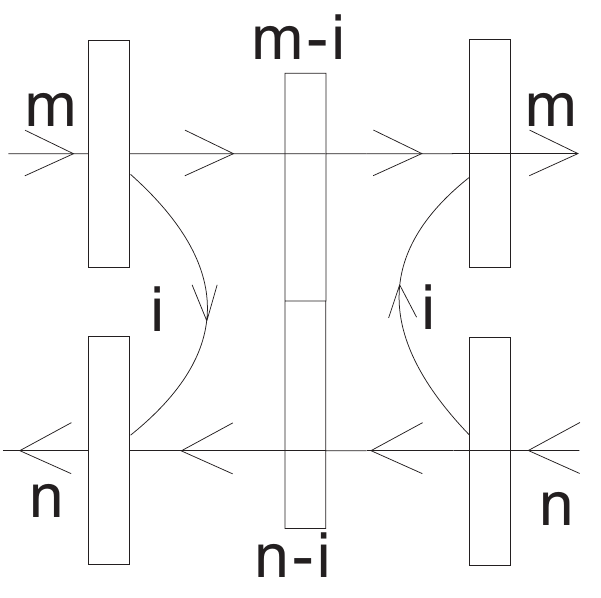}}
\end{align*}
\noindent where $y_{m,n}^i$ is defined by
\begin{align*}
y^i_{m,n} = \frac{  \{m\}_i \{n\}_i  }{ \{ i \}! \{m+n-i-1;a \}_i
}.
\end{align*}
\end{lemma}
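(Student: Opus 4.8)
The plan is to read the identity as a \emph{coefficient-extraction} statement in the skein module of the relevant tangle space: the right-hand side is written in terms of the fusion-type diagrams assembled from the $(m-i,n-i)$- and $(i,i)$-symmetrizers, and the first task is to argue that these diagrams, as $i$ ranges over $0,\ldots,\min\{m,n\}$, form a distinguished spanning set so that it suffices to pin down each $y_{m,n}^i$ separately. The key structural input is that these diagrams are mutually "orthogonal": by the vanishing property (\ref{formula-mnvanishing}) together with the idempotency (\ref{formula-mnidempotent}) of the $(m,n)$-th $q$-symmetrizer, composing the $i$-th diagram with a suitable cap-cup tangle that reconnects $j$ of the bridge strands annihilates it unless $j=i$. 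This reduces the lemma to showing that applying the $i$-selecting operation to the left-hand two-box diagram reproduces $y_{m,n}^i$ times the evaluation of the surviving right-hand term.

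Next I would perform the evaluation on each side. On the right, the surviving term collapses into a product of closed $q$-symmetrizer loops whose values are of exactly the $\langle D_{\ast,\ast}\rangle$ type computed in Lemma \ref{lemma:Dmn}; in particular, the $a$-dependent factor $\{m+n-i-1;a\}_i$ appearing in the denominator of $y_{m,n}^i$ is precisely the loop value produced by closing the $i$-strand bridge against the HOMFLY-PT skein relations (1) and (3). On the left I would expand the two-box diagram into the $(m,n)$-symmetrizer basis via Figure 3 (the $x_{m,n}^i$ expansion), resolving any internal crossings through Lemma \ref{lemma:alpha}, which supplies the coefficients $\alpha_{m,n}^j$ and the shared combinatorial factor $\{m\}_j\{n\}_j/\{j\}!$. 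The expectation is that the monomial part of $\alpha_{m,n}^i$ cancels against the $a$-dependent normalization carried by the $x_{m,n}^i$ and the loop closures, leaving exactly $\{m\}_i\{n\}_i/\{i\}!$ in the numerator and matching the right-hand evaluation.

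As a cleaner-to-organize alternative, I would set up an induction on $\min\{m,n\}$ using the defining recursion of the $n$-th $q$-symmetrizer: feeding that recursion into the smaller box splits the left-hand side into two diagrams, and matching them against the right-hand expansion (using (\ref{formula-crossing}) and (\ref{formula-vanishing}) to simplify the resulting turn-backs and twists) yields a two-term recursion relating $y_{m,n}^i$ to $y_{m,n-1}^i$ and $y_{m,n-1}^{i-1}$, which one then checks is solved by the closed form $\{m\}_i\{n\}_i/(\{i\}!\{m+n-i-1;a\}_i)$ with the trivial base case $i=0$. In either route the main obstacle is bookkeeping the $a$-dependent loop factors: the numerator $\{m\}_i\{n\}_i/\{i\}!$ is essentially forced by the $q$-symmetrizer combinatorics, but the precise denominator $\{m+n-i-1;a\}_i$ is sensitive to exactly how the $i$ bridge strands are capped off, and obtaining this shift correctly — rather than the neighboring product $\{m+n-2;a\}_i$ that appears in $x_{m,n}^i$ — is where the diagrammatic computation must be carried out with care.
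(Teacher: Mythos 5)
First, a point of reference: the paper does not prove this lemma at all --- it is imported verbatim from Kawagoe (Lemma 4.1 of \cite{Kaw2}), as the sentence introducing Lemma \ref{lemma:Dmn} makes explicit (``borrowed from \cite{Kaw2} directly''). So there is no in-paper proof to match your proposal against; it can only be judged on internal correctness and against the source's standard skein-theoretic technique, and on that count there are genuine gaps.

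The main gap is the ``orthogonality'' claim in your first route. The vanishing property (\ref{formula-mnvanishing}) kills an antiparallel turn-back only when it is composed with the $(m,n)$-th $q$-symmetrizer; the plain $m$-th and $n$-th symmetrizers sitting at the top and bottom of the right-hand diagrams do \emph{not} annihilate cap--cup tangles on antiparallel strands. On the contrary, closing antiparallel strands through plain symmetrizers produces nonzero partial traces --- this is exactly the mechanism that generates factors such as $\{m+n-i-1;a\}$, cf.\ the evaluation in Lemma \ref{lemma:Dmn}. Hence composing the $i$-th diagram with $j$ cap--cups does not vanish for $j \neq i$: the pairing matrix is only triangular, not diagonal, and your reduction ``to pin down each $y_{m,n}^i$ separately'' collapses. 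The correct linear-algebra statement is that the lemma inverts the triangular change of basis given by the $x_{m,n}^i$ expansion of Figure 3, and performing that inversion requires exactly the partial-trace evaluations that your sketch defers to an ``expected'' cancellation. Relatedly, your appeal to Lemma \ref{lemma:alpha} is misplaced: the left-hand diagram contains no crossings, so the coefficients $\alpha_{m,n}^j$, which govern full-twist expansions, never enter; note that the numerator of $y^i_{m,n}$ satisfies $\{m\}_i\{n\}_i/\{i\}! = \left[\begin{smallmatrix}m\\ i\end{smallmatrix}\right]\left[\begin{smallmatrix}n\\ i\end{smallmatrix}\right]\{i\}!$, the same combinatorial shape as $x_{m,n}^i$, arising from turn-back counting rather than from crossing resolutions. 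Your second route --- induction on $\min\{m,n\}$ via the recursion defining the $n$-th $q$-symmetrizer, leading to a two-term recursion relating $y_{m,n}^i$ to $y_{m,n-1}^i$ and $y_{m,n-1}^{i-1}$ --- is the viable strategy and is in the spirit of the Masbaum-style arguments that \cite{Kaw2} follows; but as written it is a plan rather than a proof: the recursion is never derived, and checking that its solution carries the shifted denominator $\{m+n-i-1;a\}_i$ rather than the neighboring $\{m+n-2;a\}_i$ is precisely the content of the lemma, which you correctly identify as the crux but do not carry out.
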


We set the element  $\omega_n^p \in \mathcal{S}$ by
\begin{align} \label{formula-omegap}
\omega_n^p = \sum_{i=0}^n t_{i,p} R_i,
\end{align}
for some $t_{i,p} \in \mathbb{C}$, Kawagoe determined the
coefficients $t_{i,p}$ so that $e_{\omega_n^p}(x) = t^p(x)$ for $x
\in \mathcal{D}_{n,n}$ as follows.

By the idempotent and vanishing properties of $(n,n)$-the
$q$-symmetrizer, the left-hand side of Figure 5 is transformed into
$D_{n,n}$ with the multiplication of some $T_{n,p}\in\mathbb{C}$.

\begin{figure}[!htb] \label{fig:twist}
\begin{align*}
\raisebox{-55pt}{
\includegraphics[width=95 pt]{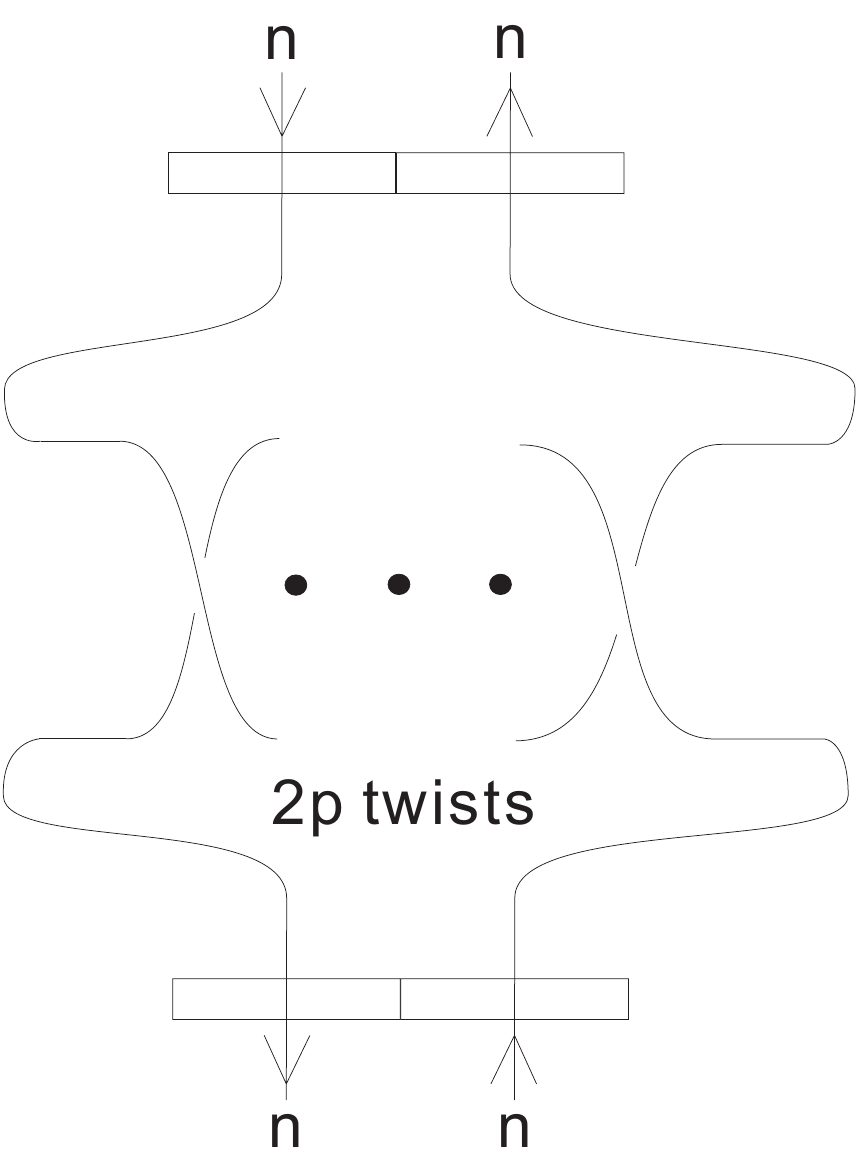}}=
T_{n,p}\quad\raisebox{-15pt}{\includegraphics[width=55
pt]{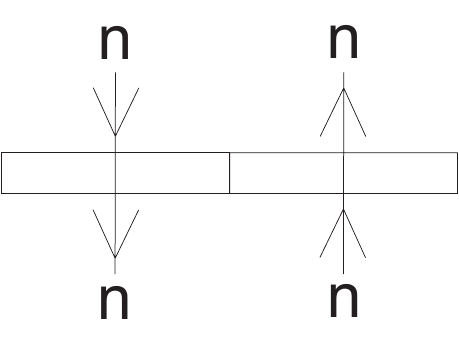}}
\end{align*}
\caption{}
\end{figure}
To determine $t_{n,p}$, Kawagoe calculated $T_{n,p}$ in two ways as
in \cite{Mas}. One is to make use of the $(n-i,n-i)$-th
$q$-symmetrizer ($i=0,\ldots,n$), and the other is to encircle $p$
full twists by $\omega_{n}^{p}$. Finally, he obtained
\begin{align}
T_{n,p} = (a^{-n}q^{-n(n-1)})^{2p} \{ n \}! \sum_{i=0}^n (-1)^i
(a^{i}q^{i(i-1)})^{2p}\frac{  \{n\}_{n-i}  }{ \{n-i\}! } \frac{
\{2i-1;a \}  }{  \{ n+i-1;a \}_{n+1}   },
\end{align}
 and
\begin{align}  \label{formula:Tnp=tnp}
T_{n,p}=
 (-1)^n  (a^nq^{n(n-1)})^{-2p} (\{n\}!)^2 t_{n,p}.
\end{align}
Hence,
\begin{align}
t_{n,p} = (-1)^n \sum_{i=0}^n   (-1)^i   \frac{( a^i
q^{i(i-1)})^{2p} }{ \{ i \}! \{ n-i \}!}. \frac{\{ 2i-1 ; a \}}{ \{
n+i-1; a \}_{n+1} }.
\end{align}

\section{Proof of the Theorem \ref{Theorem-main}}
The crucial step to prove the Theorem \ref{Theorem-main} is to give
 another explicit formula for $T_{n,p}$ which shows that $T_{n,p}$
 is a Laurent polynomial divisible by $\{n\}_{k}$.

First, we have the following lemma.

\begin{lemma} \label{lemma:2ptwist1}
We have the expansion formula
\begin{align} \label{formula-expansion}
\raisebox{-25pt}{
\includegraphics[width=120 pt]{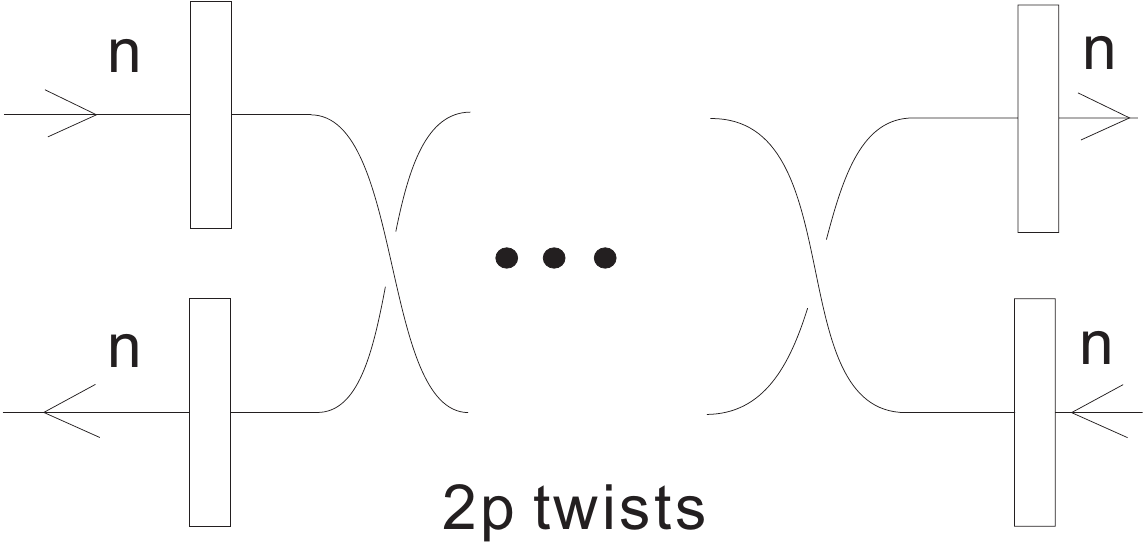}}=\sum_{k=0}^nC_{n,k}^{(p)}\raisebox{-25pt}{\includegraphics[width=60
pt]{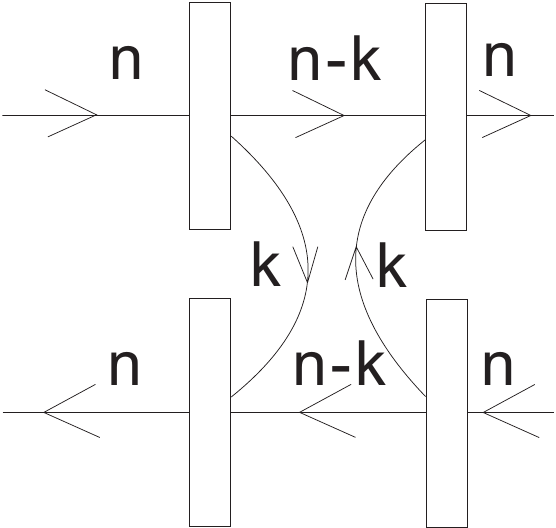}}
\end{align}
where the coefficient $C_{n,k}^{(p)}$ is a Laurent polynomial
divisible by $\{n\}_{k}$.
\end{lemma}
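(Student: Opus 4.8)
The plan is to compute the left-hand side of (\ref{formula-expansion}) by resolving the $2p$ full twists on the $(n,n)$-th $q$-symmetrizer and collecting the result in the natural basis of diagrams with $k$ connecting strands appearing on the right-hand side, thereby reading off the coefficients $C_{n,k}^{(p)}$. First I would use the idempotent property (\ref{formula-mnidempotent}) to collapse the two outer $(n,n)$-boxes into a single one, so that the left-hand side becomes a twist power acting on the $(n,n)$-symmetrizer. Resolving one full twist at a time by Lemma \ref{lemma:alpha} expresses each stage as a sum of the $i$-connected diagrams appearing on the right-hand side of (\ref{formula-mn2mn}); the vanishing property (\ref{formula-mnvanishing}) kills all the mixed terms, so after $p$ iterations the expansion is diagonal in the connection index $k$, which both establishes the shape of (\ref{formula-expansion}) and defines $C_{n,k}^{(p)}$.

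The next step is to make $C_{n,k}^{(p)}$ explicit. I would observe that the $k$-connected component of the twisted $(n,n)$-box, when closed up, reproduces precisely the computation of Figure \ref{fig:twist} at level $k$: the $p$ full twists restricted to the $k$ connecting strands contribute exactly the scalar $T_{k,p}$, while the surrounding $(n,n)$-structure contributes the geometric normalization $x_{n,n}^k$ together with the $\alpha_{n,n}^i$ weights of Lemma \ref{lemma:alpha}. Hence $C_{n,k}^{(p)}$ equals $T_{k,p}$ up to an explicit prefactor, and substituting Kawagoe's single-sum formula for $T_{k,p}$ yields a closed single-sum expression for $C_{n,k}^{(p)}$ as a Laurent polynomial in $q,q^{-1}$, after the auxiliary $\{\,\cdot\,;a\}$ denominators cancel against the numerators coming from $x_{n,n}^k$ and the closure.

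The genuine content of the lemma is the divisibility $\{n\}_k \mid C_{n,k}^{(p)}$, and I would prove it by induction on $p$ in the spirit of \cite{Mas}. The base case $p=0$ is immediate: the coefficient is $x_{n,n}^k$, and the elementary identity $\left[\begin{array}{@{\,}c@{\,}} n \\ k \end{array}\right]\{k\}! = \{n\}_k$ (which follows at once from $[j]=\{j\}/\{1\}$) exhibits $\{n\}_k$ as an explicit factor of $x_{n,n}^k$. For the inductive step, applying one additional full twist recombines the coefficients through the weights $\alpha_{n,n}^i$ of Lemma \ref{lemma:alpha}, giving a recursion $C_{n,k}^{(p+1)} = \sum_{j} \beta_{k,j}\, C_{n,j}^{(p)}$ with explicit $\beta_{k,j}$; one then checks that this recursion sends polynomials divisible by $\{n\}_k$ to polynomials divisible by $\{n\}_k$.

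Equivalently, and this is where the main obstacle lies, I would produce the ``other explicit formula'' for $T_{n,p}$ promised at the start of this section by transforming Kawagoe's single sum into a form in which $\{n\}_k$ is a manifest factor. This amounts to recognizing the sum as a terminating balanced $q$-hypergeometric series and applying a $q$-Saalsch\"utz-type summation so that the product $\{n\}_k$ is pulled out. The hard part is exactly this bookkeeping: verifying that the $a$-dependent denominators $\{n+i-1;a\}_{n+1}$ and the factors $\{2i-1;a\}$ occurring in $T_{n,p}$ conspire to leave a genuine element of $\mathbb{Z}[q,q^{-1}]$, and that the divisibility by each of $\{n\},\{n-1\},\ldots,\{n-k+1\}$ holds simultaneously, i.e.\ that $C_{n,k}^{(p)}$ vanishes under every specialization $q^{n-j}=q^{-(n-j)}$ for $j=0,\ldots,k-1$.
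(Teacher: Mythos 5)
Your first and third paragraphs do land on the paper's actual argument: the paper resolves the $p$ full twists one at a time via Lemma \ref{lemma:alpha}, obtaining $C_{n,k}^{(p)}$ as a convolution such as
$C_{n,k}^{(2)}=\sum_{l_1+l_2=k}\alpha_{n,n}^{l_1}\,a^{-2l_1}q^{2l_1(l_1-2n+1)}\,\alpha_{n-l_1,n-l_1}^{l_2}$,
and reads off the divisibility from the explicit form of the $\alpha$'s. However, the two points you leave to ``one then checks'' are the entire content of the lemma, and one of your stated anchors is wrong. The base case is not $p=0$ with coefficient $x_{n,n}^k$: that is the coefficient in the turnback basis of Figure 3, not in the basis of $k$-connected diagrams sandwiched between $(n,n)$-boxes used on the right-hand side of (\ref{formula-expansion}), and $x_{n,n}^k$ is not even a Laurent polynomial (it carries the denominator $\{2n-2;a\}_k$). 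The correct base case is $p=1$, where $C_{n,k}^{(1)}$ is $\alpha_{n,n}^{k}$ up to framing factors. For the inductive step, the check you defer is the identity $\{n\}_{j}\{n-j\}_{k-j}=\{n\}_k$: the weight multiplying $C_{n,j}^{(p)}$ is (up to units and framing powers) $\alpha_{n-j,n-j}^{k-j}$, which contributes $\{n-j\}_{k-j}^2/\{k-j\}!$, and combining one copy of $\{n-j\}_{k-j}$ with the inductively known factor $\{n\}_j$ produces $\{n\}_k$, while the leftover $\{n-j\}_{k-j}/\{k-j\}!=\left[\begin{array}{@{\,}c@{\,}}n-j \\ k-j\end{array}\right]$ keeps everything in $\mathbb{Z}[q,q^{-1},a,a^{-1}]$. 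Without this telescoping the divisibility claim is unsupported. Also, ``the expansion is diagonal in the connection index'' is a misstatement: the indices accumulate, $k=l_1+\cdots+l_p$.

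Your second and fourth paragraphs should be dropped, as they would lead you astray. The identification $C_{n,k}^{(p)}=T_{k,p}\times(\text{prefactor})$ fails for $p\geq 2$: in the convolution above the $n$-dependent exponent is $q^{-2n(3l_1+l_2)+\cdots}$, which varies with the composition $(l_1,l_2)$ and not only with $k$, so no common prefactor can reduce $C_{n,k}^{(p)}$ to $C_{k,k}^{(p)}=T_{k,p}$. And the $q$-Saalsch\"utz resummation of Kawagoe's single-sum formula for $T_{n,p}$ is precisely what the paper is designed to avoid: the ``other explicit formula'' for $T_{n,p}$ is obtained for free as the multi-sum $C_{n,n}^{(p)}$ coming out of the skein expansion (Lemma \ref{lemma:T=C}), not by transforming the single sum with its $a$-dependent denominators $\{n+i-1;a\}_{n+1}$. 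Identifying that this resummation is ``the hard part'' signals that you have not yet seen that the iterated-twist expansion already sidesteps it.
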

\begin{proof}
After applying Lemma \ref{lemma:alpha}, one term index by $k$ in
sigma sum reads
\begin{align} \label{formula-2fulltwist}
\raisebox{-20pt}{
\includegraphics[width=110 pt]{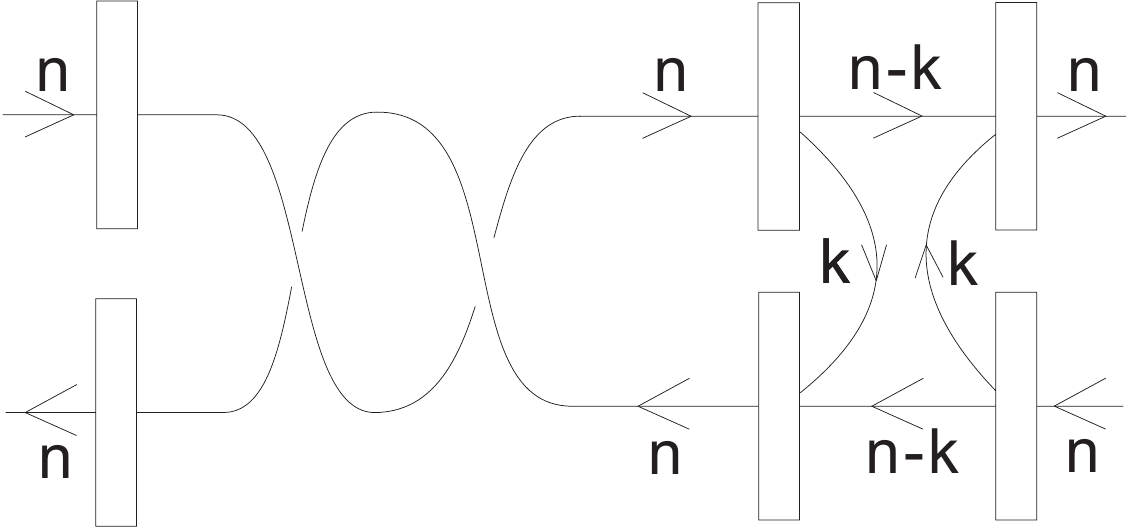}}&=\raisebox{-20pt}{
\includegraphics[width=100 pt]{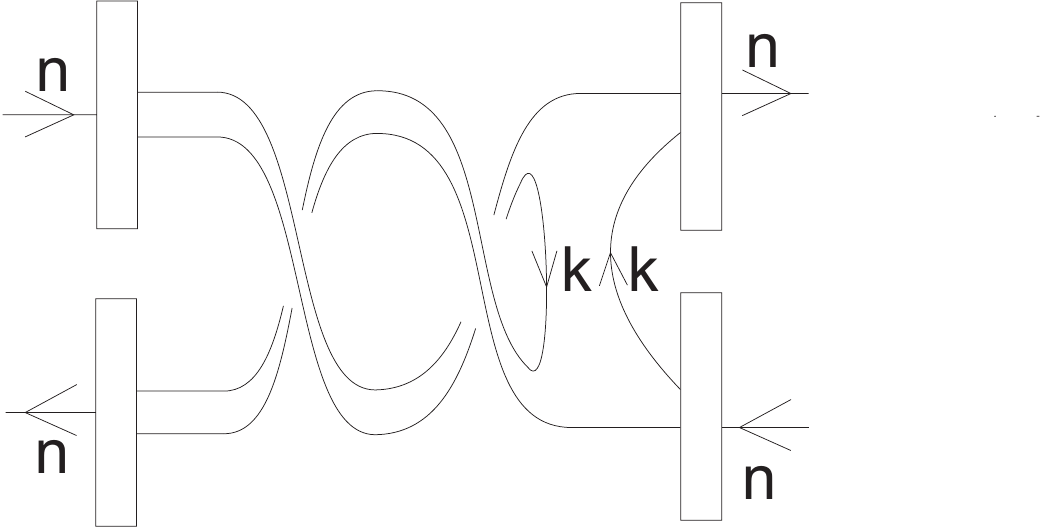}}\\\nonumber
&=(a^kq^{k(k-1)})^{-2}\raisebox{-20pt}{\includegraphics[width=100
pt]{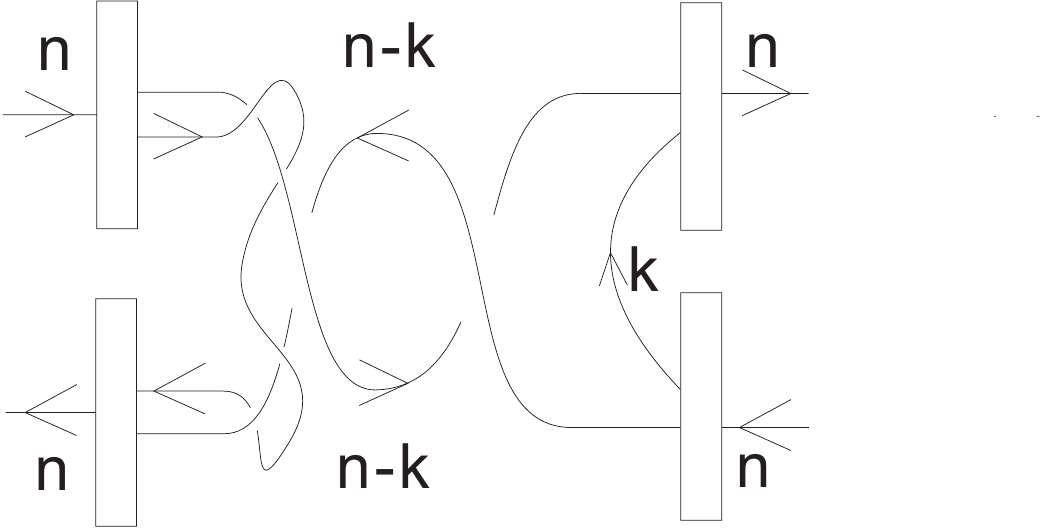}}\\\nonumber
&=(a^kq^{k(k-1)})^{-2}q^{-4k(n-k)}\raisebox{-20pt}{\includegraphics[width=100
pt]{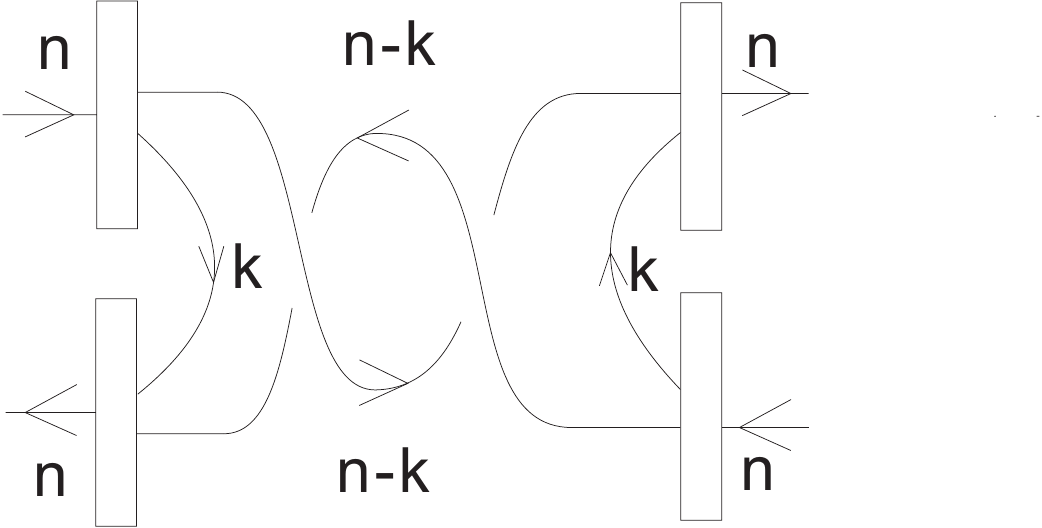}}
\end{align}
where we have used the properties (\ref{formula-crossing}) and
(\ref{formula-vanishing}). Then, applying Lemma \ref{lemma:alpha} to
the $n-k$ twisted strands in the last diagram of formula
(\ref{formula-2fulltwist}), we obtain the expansion
(\ref{formula-expansion}) for $p=2$, and
\begin{align}
C_{n,k}^{(2)}=\sum_{l_1+l_2=k}\alpha_{n,n}^{l_1}a^{-2l_1}q^{2l_1(l_1-2n+1)}\alpha_{n-l_1,n-l_1}^{l_2}.
\end{align}
The formula (\ref{formula:alpha}) for $\alpha_{n,n}^{i}$ implies
that $C_{n,k}^{(2)}$ is a Laurent polynomial divisible by
$\{n\}_{k}$. The proof for general $p$ is similar.

\end{proof}

\begin{remark} \label{remark:-p}
In formula (\ref{lemma:2ptwist1}), when all the negative crossings
change into positive crossings, the coefficients
 also change so that $a,q$ are replaced by $a^{-1},q^{-1}$, i.e.
\begin{align}
C_{n,k}^{(p)}(q,a)=C_{n,k}^{(-p)}(q^{-1},a^{-1}), \ \text{for} \
p<0.
\end{align}
\end{remark}

\begin{lemma} \label{lemma:T=C}
The following holds:
\begin{align}
T_{n,p}=C_{n,n}^{(p)}.
\end{align}
\end{lemma}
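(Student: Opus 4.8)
The plan is to compare the two diagrammatic identities that define $T_{n,p}$ and $C_{n,n}^{(p)}$ by closing up the open tangle of Lemma \ref{lemma:2ptwist1} in exactly the way that produces the left-hand side of Figure 5, and then reading off the coefficient of $D_{n,n}$. First I would start from the expansion (\ref{formula-expansion}) in Lemma \ref{lemma:2ptwist1}, which writes the $2p$-twisted $(n,n)$-tangle as $\sum_{k=0}^n C_{n,k}^{(p)}$ times the standard basis tangle indexed by $k$. Both sides here are tangles with free endpoints, so the identity is preserved under any fixed closure of those endpoints.

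Next I would apply to both sides of (\ref{formula-expansion}) the same ``rotation''/closure operation that turns the $2p$-twisted tangle into the left-hand side of Figure 5, i.e. bending the top $(n,n)$-symmetrizer around and capping it against the bottom one. On the left-hand side this closure is, by definition, the left-hand side of Figure 5, which by the discussion following that figure equals $T_{n,p}\,D_{n,n}$. On the right-hand side I must evaluate the closure of each basis tangle. The key claim is that this closure sends the $k$-th basis diagram to $D_{n,n}$ when $k=n$ and to $0$ when $k<n$: for $k=n$ the two stacked $(n,n)$-symmetrizers collapse to a single one by the idempotency relation (\ref{formula-mnidempotent}), and the closed diagram is precisely $D_{n,n}$; for $k<n$ the closure caps off $n-k>0$ of the strands leaving the $(n,n)$-symmetrizer, producing exactly the half-circle configuration of (\ref{formula-mnvanishing}), so the diagram vanishes. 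Hence the right-hand side collapses to the single surviving term $C_{n,n}^{(p)}\,D_{n,n}$.

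Finally, matching the two expressions gives $T_{n,p}\,D_{n,n}=C_{n,n}^{(p)}\,D_{n,n}$. Applying the evaluation map $\langle\,\cdot\,\rangle$ and using $\langle D_{n,n}\rangle\neq 0$, which follows from the explicit formula of Lemma \ref{lemma:Dmn}, I may cancel to obtain $T_{n,p}=C_{n,n}^{(p)}$.

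The main obstacle is the middle step: one must verify carefully, using only the idempotency relation (\ref{formula-mnidempotent}) and the vanishing relation (\ref{formula-mnvanishing}), that closing the $k$-th basis tangle genuinely yields $\delta_{k,n}\,D_{n,n}$ — in particular, that for $k<n$ the capping really creates the vanishing half-circle pattern rather than some nonzero lower diagram, and that for $k=n$ the collapse reproduces $D_{n,n}$ on the nose with coefficient $1$. Once this diagrammatic identity is established, the remainder is a formal comparison of coefficients.
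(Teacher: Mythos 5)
Your proposal is correct and follows essentially the same route as the paper: apply the expansion of Lemma \ref{lemma:2ptwist1} to the closed-up diagram of Figure 5, use the vanishing property (\ref{formula-mnvanishing}) to kill the terms with $k<n$ and the idempotency (\ref{formula-mnidempotent}) to collapse the $k=n$ term to $D_{n,n}$, and compare with the defining relation $T_{n,p}D_{n,n}$. Your extra justification via $\langle D_{n,n}\rangle\neq 0$ from Lemma \ref{lemma:Dmn} is a reasonable way to make the final coefficient comparison rigorous, but the argument is otherwise identical to the paper's.
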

\begin{proof}
By the idempotent and vanishing properties
(\ref{formula-mnidempotent}) and (\ref{formula-mnvanishing}) for the
$(n,n)$-th $q$-symmetrizer, we have
\begin{align} \label{formula-Cnn}
\raisebox{-40pt}{
\includegraphics[width=85
pt]{nn2pnnrotate.pdf}}=\sum_{k=0}^nC_{n,k}^{(p)}\quad\raisebox{-40pt}{\includegraphics[width=75
pt]{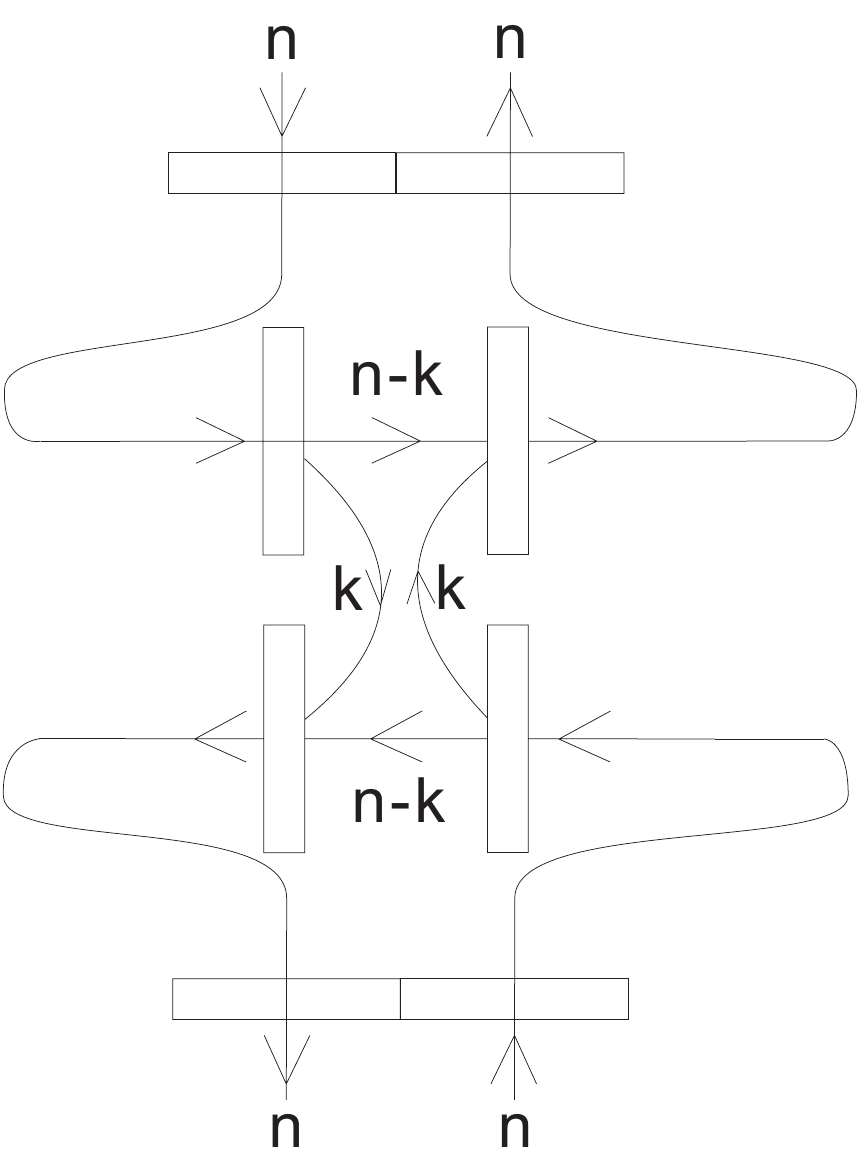}}=C_{n,n}^{(p)}\quad\raisebox{-20pt}{\includegraphics[width=60
pt]{nnnnrotate.pdf}}.
\end{align}
Comparing to the Figure 5, we obtain $T_{n,p}=C_{n,n}^{(p)}.$
\end{proof}

Actually, we can present the explicit formula for $C_{n,n}^{(p)}$
 as follows. Note that the formula (\ref{formula:alpha})
gives
\begin{align}
\alpha_{n,n}^{i}=(-a)^{-i}q^{-2ni+\frac{i(i+3)}{2}}\left[
\begin{array}{@{\,}c@{\,}}n \\ i
\end{array} \right]\{n\}_i.
\end{align}

When $p\geq 1$, we consider the sum over all multi-indices denoted
by $\underline{l}=(l_1,...,l_p)$ such that $l_i\geq 0$ for all $i$,
and $\sum_{i=1}^p l_i=n$. We put $s_i=l_1+\cdots +l_i$ for
$i=1,...,p$ and define
\begin{align}
\varphi(\underline{l})=\sum_{i=1}^{p-1}2s_i(s_i-2n+1)+\sum_{i=1}^p\frac{l_i(l_i+3)}{2}+2\sum_{i=1}^{p-1}s_il_{i+1},
\end{align}
and
\begin{align}
\left[
\begin{array}{@{\,}c@{\,}}n \\ \underline{l}
\end{array} \right]=\frac{[n]!}{[l_1]!\cdots[l_p]!}.
\end{align}
Then
\begin{align}
C_{n,n}^{(p)}&=\sum_{\underline{l}}\alpha_{n,n}^{l_1}a^{-2l_1}q^{2l_1(l_1-2n+1)}\alpha_{n-l_1,n-l_1}^{l_2}a^{-2s_2}q^{2s_2(s_2-2n+1)}\cdots\\\nonumber
&\times\alpha_{n-s_{p-2},n-s_{p-2}}^{l_{p-1}}a^{-2s_{p-1}}q^{2s_{p-1}(s_{p-1}-2n+1)}\alpha_{n-s_{p-1},n-s_{p-1}}^{l_p}\\\nonumber
&=\sum_{\underline{l}}a^{-2\sum_{i=1}^{p-1}s_i}q^{\sum_{i=1}^{p-1}2s_i(s_i-2n+1)}\alpha_{n,n}^{l_1}\alpha_{n-s_1,n-s_1}^{l_2}\cdots
\alpha_{n-s_{p-1},n-s_{p-1}}^{l_p}\\\nonumber
&=(-1)^na^{-n}q^{-2n^2}\{n\}!\sum_{\underline{l}}a^{-2\sum_{i=1}^{p-1}(p-i)l_i}q^{\varphi(\underline{l})}\left[
\begin{array}{@{\,}c@{\,}}n \\ \underline{l}
\end{array} \right]
\end{align}

For $p\leq -1$, by Remark \ref{remark:-p}, we can define
\begin{align}
C_{n,n}^{(p)}(q,a)=C^{(-p)}_{n,n}(q^{-1},a^{-1}).
\end{align}

Finally, we put
\begin{align} \label{formula-tildec}
\tilde{C}_{k,k}^{(p)}:=\frac{C_{k,k}^{(p)}}{\{k\}!}=(-1)^ka^{-k}q^{-2k^2}\sum_{\underline{l}}a^{-2\sum_{i=1}^{p-1}(p-i)l_i}q^{\varphi(\underline{l})}\left[
\begin{array}{@{\,}c@{\,}}k \\ \underline{l}
\end{array} \right],
\end{align}
then $\tilde{C}_{k,k}^{(p)}$ is Laurent polynomial in
$\mathbb{Z}[q,q^{-1}]$.

For examples,
\begin{align}
\tilde{C}_{k,k}^{(1)}&=(-1)^ka^{-k}q^{-\frac{3k(k-1)}{2}}, \ \
\tilde{C}_{k,k}^{(-1)}=a^{k}q^{\frac{3k(k-1)}{2}}, \\ \nonumber
\tilde{C}_{k,k}^{(2)}&=(-1)^ka^{-k}q^{-\frac{3k(k-1)}{2}}\sum_{l=0}^ka^{-2l}q^{-3kl+l(l+2)}\left[
\begin{array}{@{\,}c@{\,}}k \\ l
\end{array} \right], \\ \nonumber
\tilde{C}_{k,k}^{(-2)}&=a^{k}q^{\frac{3k(k-1)}{2}}\sum_{l=0}^ka^{2l}q^{3kl-l(l+2)}\left[
\begin{array}{@{\,}c@{\,}}k \\ l
\end{array} \right].
\end{align}

Now, we are ready to calculate the colored HOMFLY-PT polynomial of
the double twist knots $\mathcal{K}_{p,s}$. By its definition, the
$N$-th colored HOMFLY-PT polynomial for a knot $\mathcal{K}$ is
given by
\begin{align}
\mathscr{H}_N(\mathcal{K}) = \frac{\langle \mathcal{K}(H_N) \rangle
}{ \langle U(H_N) \rangle } = \frac{ \{N\}!  }  {\{ N-1;a\}_N}
{\langle \mathcal{K}(H_N) \rangle},
\end{align}
where $\mathcal{K}(H_N)$  denotes the knot $\mathcal{K}$ cabled by
$H_N$ with compatible orientations. Note that if we assume that the
framing of the knot $\mathcal{K}$ is to be $0$, then this definition
of $\mathscr{H}_N(\mathcal{K})$ is equal to the formula
$(\ref{formula-normalizedhomfly})$.

For two integers $p,s$, the double twist knot $\mathcal{K}_{p,s}$ is
described in Figure \ref{fig:doubletwist}. Note that
$\mathcal{K}_{-p,-s}$ is the mirror of $\mathcal{K}_{p,s}$, we will
assume $p\in \mathbb{Z}$ and $s\geq 1$.  From this, in particular,
we can see that $\mathcal{K}_{1,1}$ is a left-handed trefoil,
$\mathcal{K}_{-1,1}$ is a figure-eight knot and $\mathcal{K}_{2,1}$
is the $5_2$ knot.

We observe that the techniques used in \cite{Mas,Kaw2} are also
workable to compute the double twist knot $\mathcal{K}_{p,s}$.
First, by using the definition of the encircling map and
$\omega^{p}$, we have the following description for double twist
knot with framing zero.

\begin{align*}
\includegraphics[width=120
pt]{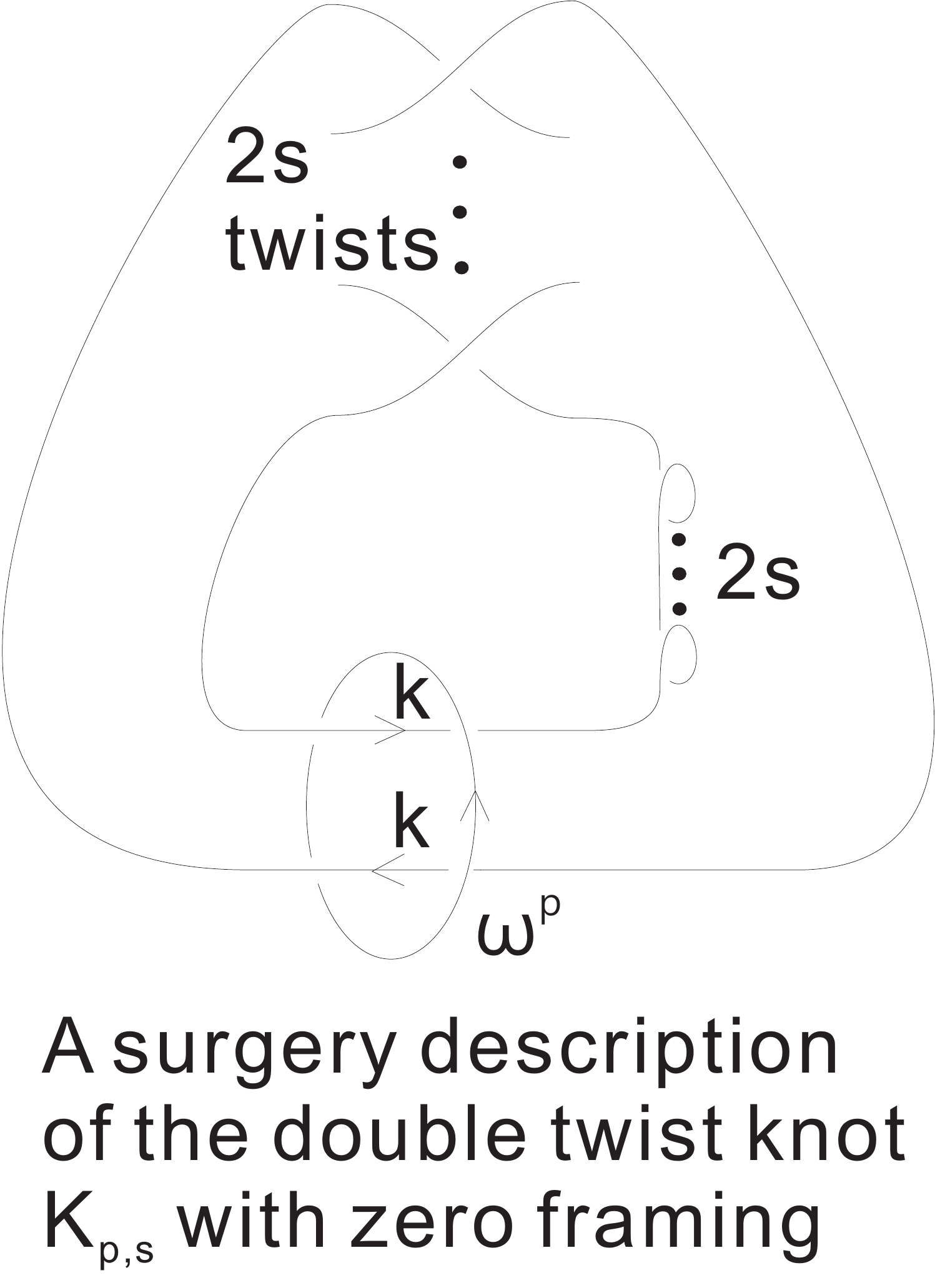}
\end{align*}

Let us calculate $\langle \mathcal{K}_{p,s}(H_N) \rangle$ by using
the above description for $\mathcal{K}_{p,s}$. Since $H_N$ has the
following presentation:
\begin{align*}
H_N=\sum_{k=0}^{N} \frac{ \{ N+k-1;a \}_{N-k} }{ \{ N-k\}! } R_k,
\end{align*}
one can insert this presentation along the above description instead
of $H_N$. Consider a pair $R_i \in \mathcal{K}_{p,s}(H_N)$ and $R_j$
from the expression $\omega_{N}^{p}$ (cf. formula
(\ref{formula-omegap})), the key observation is that this pair is a
cabling of the (twisted) Whitehead link, of which one component
pierces the other twice in the opposite direction each other.
Moreover, $R_i$ and $R_j$ are a linear combination of $D_{k,k}$ with
$k \leq i$ and $k \leq j$, respectively. According to Lemma
\ref{Coro:vanishi}, this pair vanishes if $i \neq j$. Therefore, we
can calculate $\langle \mathcal{K}_{p,s}(H_N) \rangle$ as follows:
\begin{align}
\langle \mathcal{K}_{p,s}(H_N)
\rangle=\sum_{k=0}^{N}\frac{\{N+k-1;a\}}{\{N-k\}!}t_{k,p}\raisebox{-40pt}{
\includegraphics[width=100
pt]{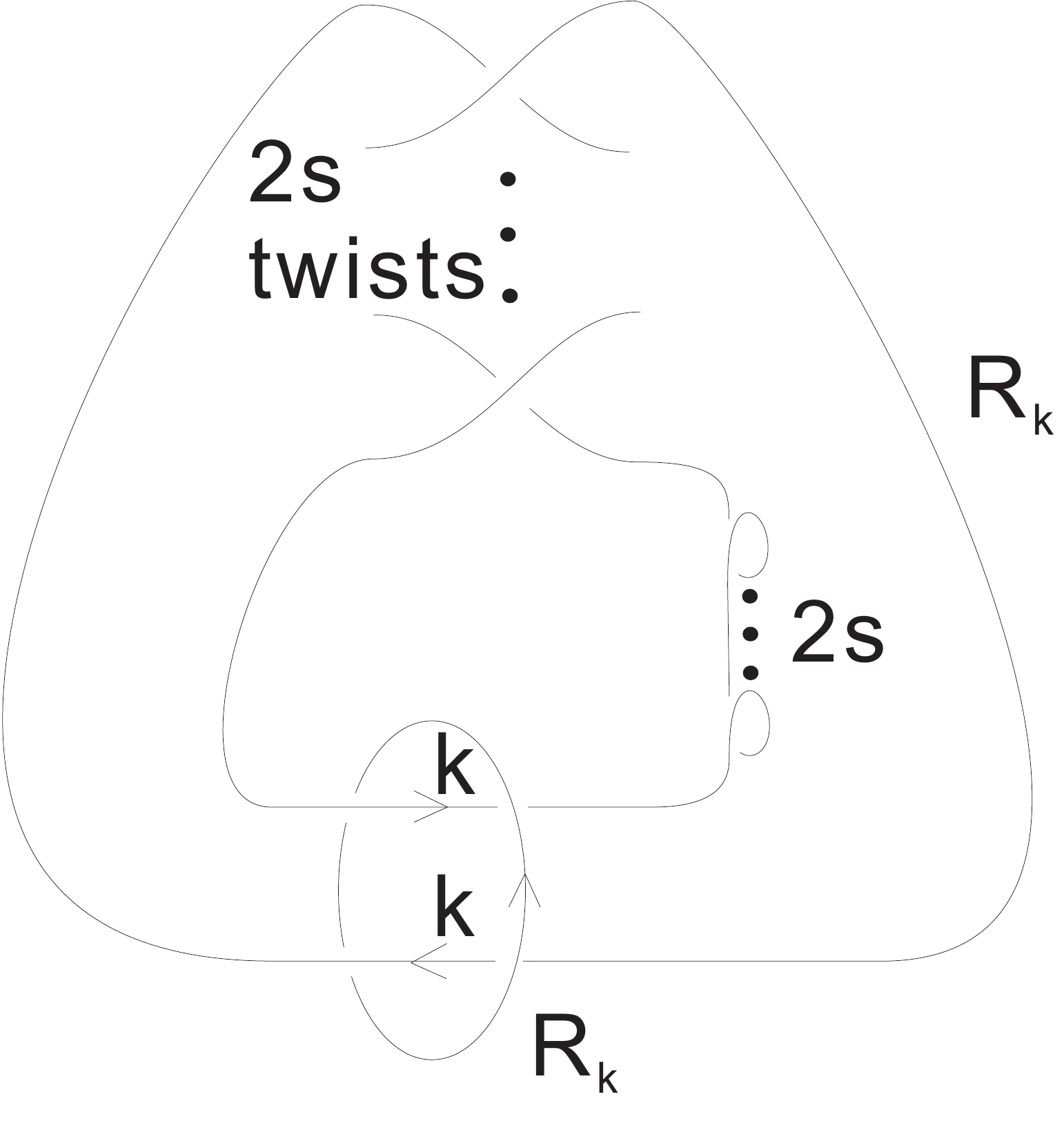}},
\end{align}
and since $R_k-H_k$ is a linear combination of $R_j$ with $j<k$, we
have
\begin{align*}
\raisebox{-40pt}{
\includegraphics[width=100
pt]{double-twist-2s-twist-RkRk.pdf}}&=\raisebox{-40pt}{
\includegraphics[width=100
pt]{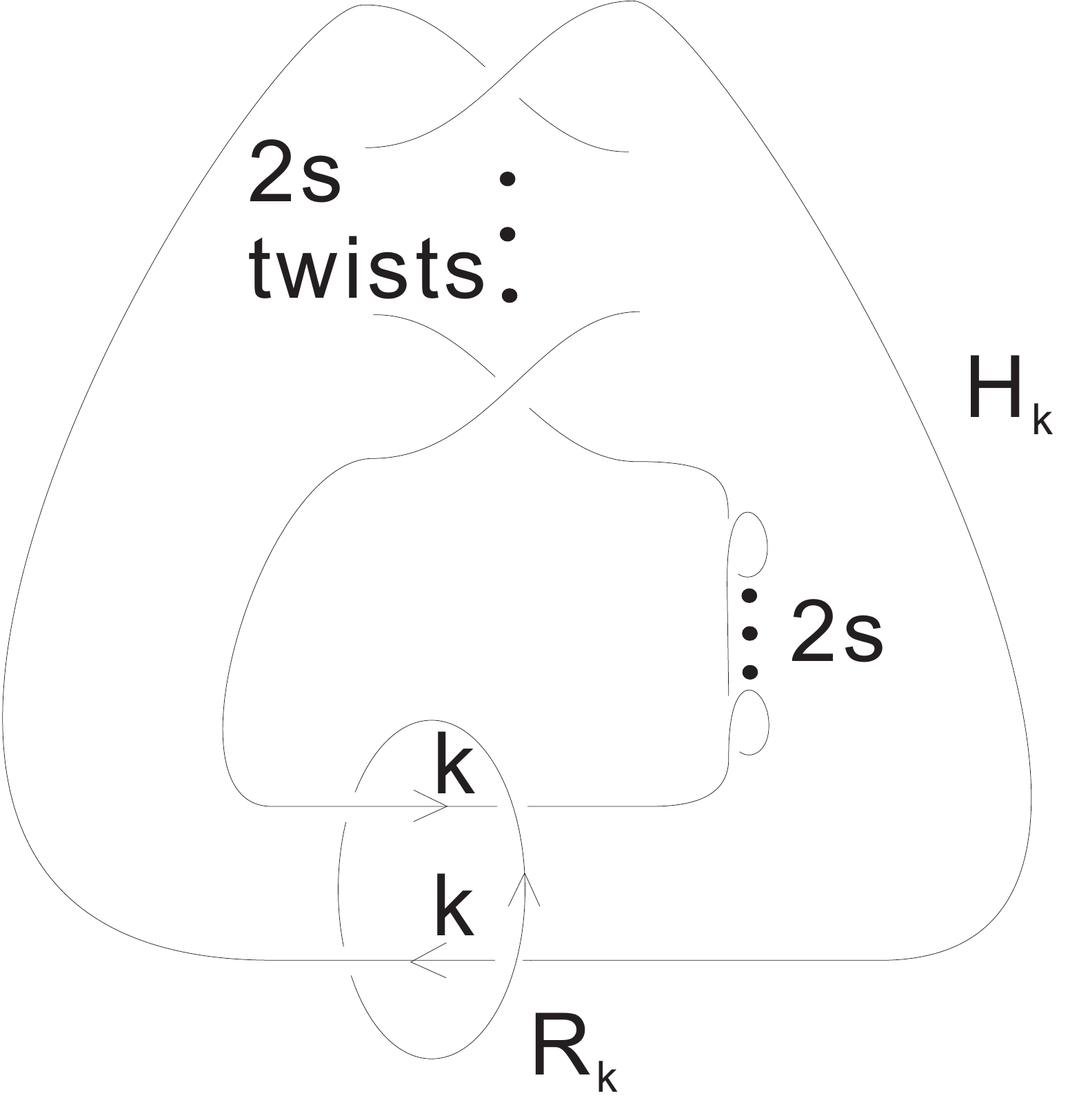}} \ \ \text{(by Lemma
\ref{Coro:vanishi})}\\\nonumber
&=(a^kq^{k(k-1)})^{2s}\raisebox{-40pt}{
\includegraphics[width=100
pt]{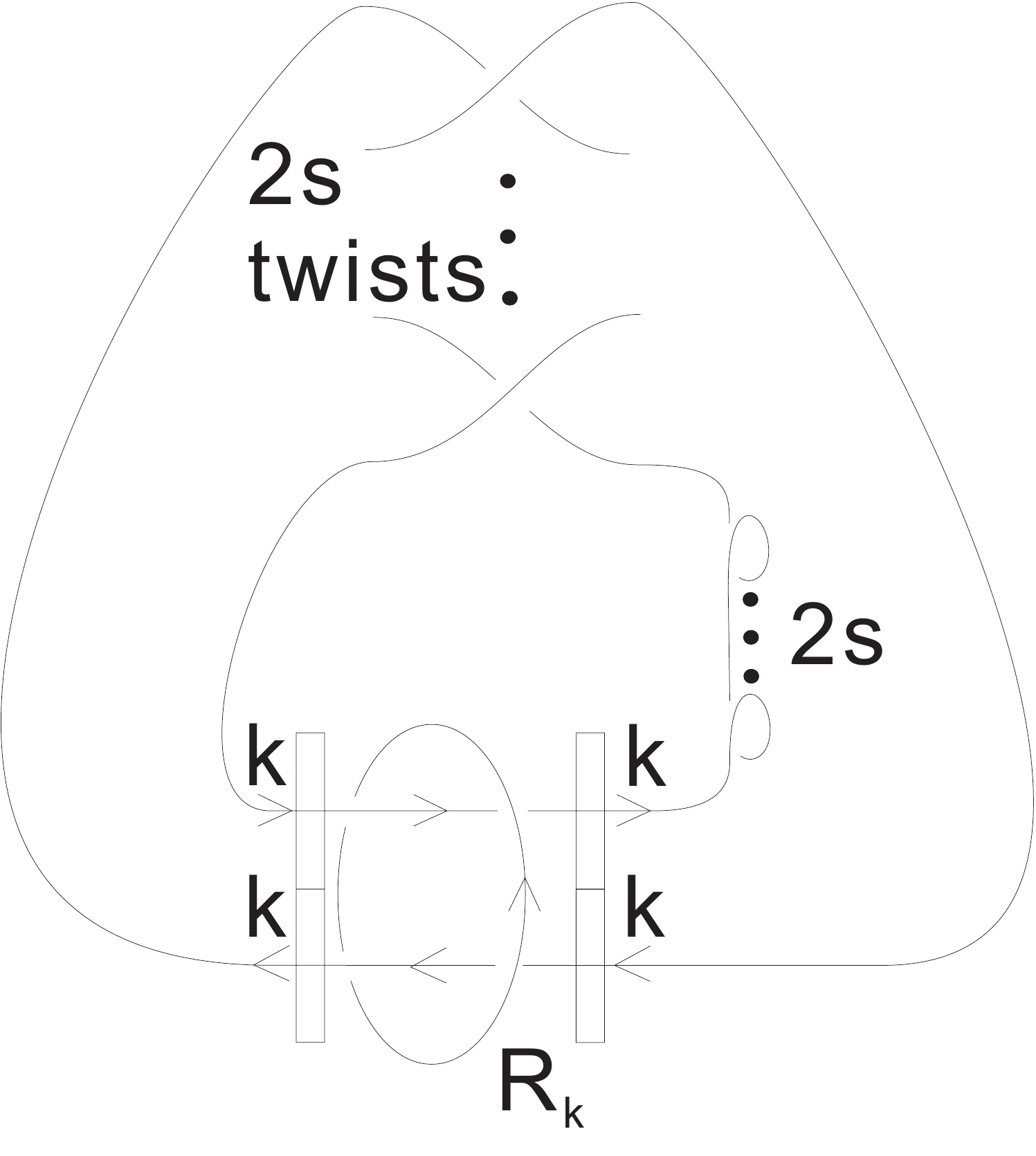}} \ \ \text{(By Lemma
\ref{lemma:ymn} and Lemma \ref{Coro:vanishi})}
\\\nonumber
&=(a^kq^{k(k-1)})^{2s}C_{k,k}^{(s)}\raisebox{-40pt}{
\includegraphics[width=120
pt]{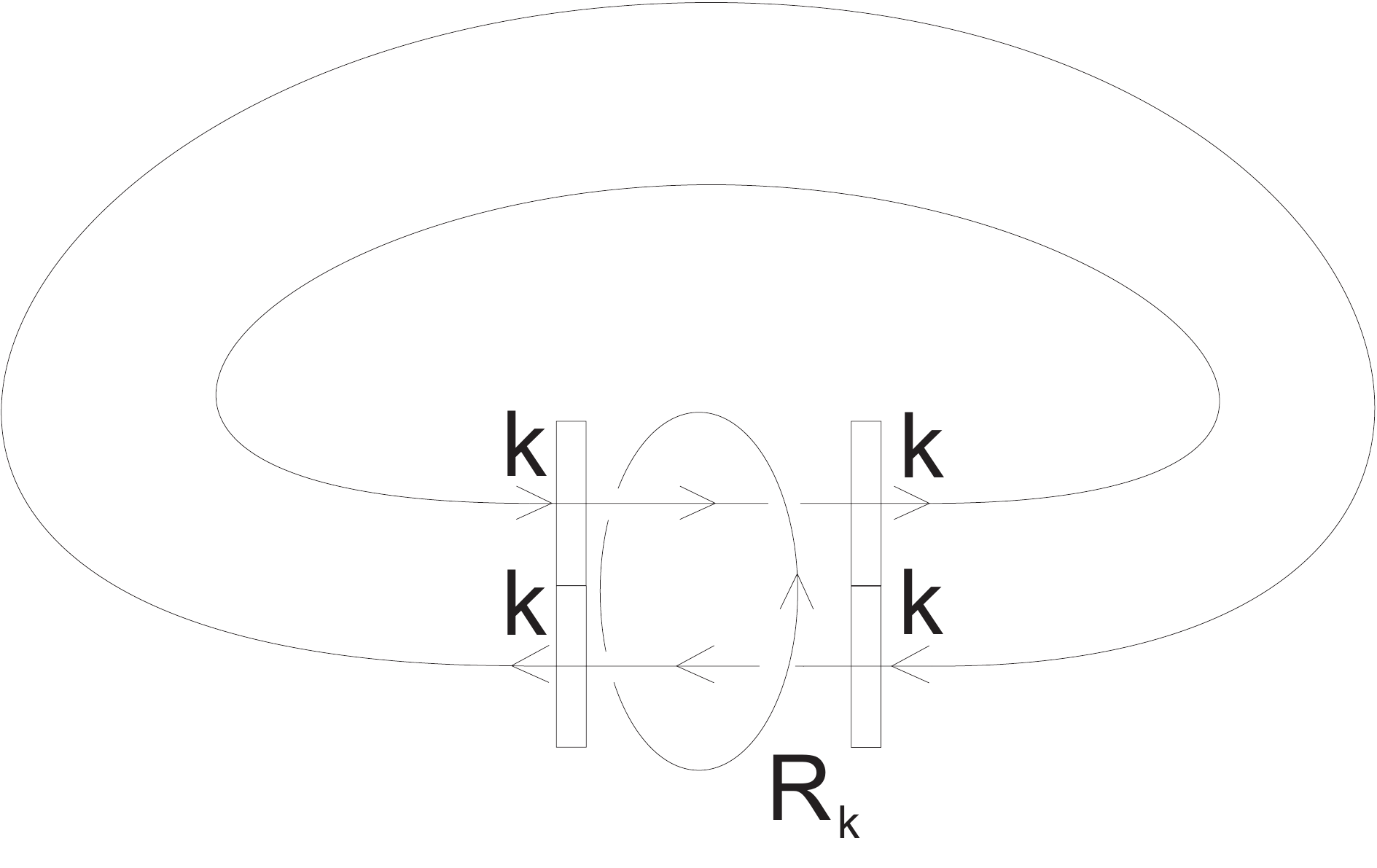}}\ \ \text{(By formula
(\ref{formula-Cnn}))}
\\\nonumber
&=(a^kq^{k(k-1)})^{2s}C_{k,k}^{(s)}\theta_{k,k}\langle
D_{k,k}\rangle \ \ \text{(By Lemma \ref{Coro:vanishi} and Lemma
\ref{lemma:Dmn})}\\\nonumber
&=(a^kq^{k(k-1)})^{2s}C_{k,k}^{(s)}\frac{\{2k-1\}_{2k}\{k-2;a\}_k}{\{k\}!}
\end{align*}

Therefore,
\begin{align} \label{formula-HN-Kps}
\mathscr{H}_{N}(\mathcal{K}_{p,s})&=\frac{\langle K_{p,s}(H_N)
\rangle}{\langle U(H_N)\rangle}\\\nonumber
&=\frac{\{N\}!}{\{N-1;a\}_N}\sum_{k=0}^{N}\frac{\{N+k-1;a\}_{N-k}}{\{N-k\}!}t_{k,p}
(a^kq^{k(k-1)})^{2s}C_{k,k}^{(2s)}\frac{\{2k-1\}_{2k}\{k-2;a\}_k}{\{k\}!}\\\nonumber
&=\sum_{k=0}^Nt_{k,p}(a^{k}q^{k(k-1)})^{2s}C_{k,k}^{(s)}\left[\begin{array}{@{\,}c@{\,}}N
\\ k\end{array} \right]\{N+k-1;a\}_k\{k-2;a\}_k.
\end{align}

By using formula (\ref{formula:Tnp=tnp}) and Lemma \ref{lemma:T=C},
we obtain
\begin{align}
t_{k,p}=(-1)^k(a^kq^{k(k-1)})^{2p}\frac{C_{k,k}^{(p)}}{\{k\}!^2}.
\end{align}
Substituting it back to formula (\ref{formula-HN-Kps}), we obtain
Theorem \ref{Theorem-main}.

As an application of our main Theorem \ref{Theorem-main}, we present
several concrete calculations for the twist knots with small
crossings.
\begin{example}
For the trefoil knot $3_1$ and figure-8 knot $4_1$ which are
$\mathcal{K}_{1,1}$ and $\mathcal{K}_{-1,1}$ respectively,

\begin{align} \label{formula-31}
\mathscr{H}_{N}(3_1;q,a)=\sum_{k=0}^{N}(-1)^ka^{2k}q^{k(k-1)}\left[
\begin{array}{@{\,}c@{\,}}N \\ k
\end{array} \right] \{ N+k-1;a \}_k \{k-2;a \}_k.
\end{align}
\begin{align} \label{formula-41}
\mathscr{H}_{N}(4_1;q,a)=\sum_{k=0}^{N}\left[
\begin{array}{@{\,}c@{\,}}N \\ k
\end{array} \right] \{ N+k-1;a \}_k \{k-2;a \}_k.
\end{align}
which are the formulae given by Corollary 5.2 in \cite{Kaw2}.
\end{example}

\begin{example}
For the $5_2$ knot $\mathcal{K}_{2,1}$, we have
\begin{align} \label{formula-52}
\mathscr{H}_{N}(5_2;q,a)=&\sum_{k=0}^{N}\left((-1)^ka^{4k}q^{3k(k-1)}\sum_{l=0}^{k}a^{-2l}q^{-3kl+l(l+2)}\left[
\begin{array}{@{\,}c@{\,}}k \\ l
\end{array} \right]\right)\\\nonumber&\cdot\left[
\begin{array}{@{\,}c@{\,}}N \\ k
\end{array} \right] \{ N+k-1;a \}_k \{k-2;a \}_k.
\end{align}
In particular, for $a=q^2$, we have $$\left[
\begin{array}{@{\,}c@{\,}}N \\ k
\end{array} \right]\{ N+k-1;a \}_k \{k-2;a
\}_k=\frac{\{N+k+1\}\{N+k\}\cdots\{N-k+1\}}{\{N+1\}}.$$

Then
\begin{align} \label{formula-52}
\mathscr{H}_{N}(5_2;q,q^2)=&\sum_{k=0}^{N}\left((-1)^kq^{3k^2+5k}\sum_{l=0}^{k}q^{-3kl+l(l-2)}\left[
\begin{array}{@{\,}c@{\,}}k \\ l
\end{array} \right]\right)\\\nonumber
&\cdot\frac{\{N+k+1\}\{N+k\}\cdots\{N-k+1\}}{\{N+1\}}.
\end{align}
which is equal to $J'_{K_2}(N+1)$ in \cite{Mas} ( cf. Theorem 5.1 in
\cite{Mas} for $p=2$, where the variable $a$ is just the variable
$q$ in our notation).
\end{example}

\begin{example}
For the $6_1$ knot $\mathcal{K}_{-2,1}$, we have
\begin{align} \label{formula-61}
\mathscr{H}_{N}(6_1;q,a)=&\sum_{k=0}^{N}\left(a^{-2k}q^{-2k(k-1)}\sum_{l=0}^{k}a^{2l}q^{3kl-l(l+2)}\left[
\begin{array}{@{\,}c@{\,}}k \\ l
\end{array} \right]\right)\\\nonumber&\cdot\left[
\begin{array}{@{\,}c@{\,}}N \\ k
\end{array} \right] \{ N+k-1;a \}_k \{k-2;a \}_k.
\end{align}
In particular, for $a=q^2$, we obtain
\begin{align} \label{formula-61}
\mathscr{H}_{N}(6_1;q,q^2)=&\sum_{k=0}^{N}\left(q^{-2k(k+1)}\sum_{l=0}^{k}q^{3kl-l(l-2)}\left[
\begin{array}{@{\,}c@{\,}}k \\ l
\end{array} \right]\right)\\\nonumber
&\cdot\frac{\{N+k+1\}\{N+k\}\cdots\{N-k+1\}}{\{N+1\}}.
\end{align}
which is equal to $J'_{K_{-2}}(N+1)$ in \cite{Mas} ( cf. Theorem 5.1
in \cite{Mas} for $p=-2$, where the variable $a$ is just the
variable $q$ in our notation).
\end{example}


\begin{thebibliography}{99}
\bibitem{CLPZ} Q. Chen, K. Liu, P. Peng and S. Zhu, \emph{Congruent skein
relations for colored HOMFLY-PT invariants and colored Jones
polynomials}, arXiv: 1402.3571, submitted.

\bibitem{CLZ} Q. Chen, K. Liu and S. Zhu, {\em Volume conjecture for
$SU(n)$-invariants}, arXiv:1511.00658.


\bibitem{Hab} K. Habiro, \emph{A unified Witten-Reshetikhin-Turaev invariant
for integral homology spheres}, Invent. Math. 171 (2008), no. 1,
1-81.

\bibitem{Kaw1} K. Kawagoe. {\em On the formulae for the colored HOMFLY polynomials}.
J. Geom. Phys. 106 (2016), 143-154.


\bibitem{Kaw2} K. Kawagoe, {\em The colored HOMFLY-PT polynomials of the trefoil knot, the figure-eight knot and twist
knots}. arXiv:2107.08678.


\bibitem{Mas}
G. Masbaum, {\em Skein-theoretical derivation of some formulas of
Habiro}. \textit{Algebraic \& Geometric Topology}, \textbf{3}
(2003), 537-556.

\bibitem{NO} S. Nawata and A. Oblomkov, {\em Lectures on knot homology},
arXiv: 1510.01795.

\bibitem{Zhu} S. Zhu, {\em New structures for colored HOMFLY-PT
invariants}, arXiv:2105.02037.

\end{thebibliography}
\end{document}